\documentclass[runningheads,a4paper,envcountsame]{llncs}

\usepackage{amsfonts}
\usepackage{amssymb}
\usepackage{mathtools}
\usepackage[all,cmtip]{xy}
\usepackage[bookmarks=false]{hyperref} 
\usepackage{xcolor}
\usepackage{todonotes}

\renewcommand{\phi}{\varphi}

\title{Almost sure OTM-realizability}
\author{Merlin Carl}
\date{August 2023}
\institute{Institut f\"ur Mathematik, Europa-Universit\"at Flensburg}

\begin{document}

\maketitle

\begin{abstract}
Combining the approaches made in works with Galeotti and Passmann, we define and study a notion of ``almost sure'' realizability with parameter-free ordinal Turing machines (OTMs). In particular, we show that, in contrast to the classical case, almost sure realizability differs from plain realizability, while closure under intuitionistic predicate logic and realizability of Kripke-Platek set theory continue to hold.
\end{abstract}

\section{Introduction}

Kleene realizability is one of the most prominent interpretations for intuitionistic logic. However, by its reliance on Turing computability, it is limited to finite objects and procedures and not adapted to notions of generalized or ``relaxed'' effectivity, as described, e.g., in Hodges \cite{Hodges}. Therefore, a number of variants of Kleene realizability has been defined, among them one which is based on computability with Koepke's ordinal Turing machines (OTMs) \cite{Koepke} rather than Turing machines (see \cite{CGP-OTM}). On the other hand, in \cite{CGP}, a ``randomized'' version of realizability was investigated, in which realizers are merely supposed to work with high probability relative to a random oracle. One of the results of \cite{CGP} (Theorem $25$) is that, due to Sacks' theorem (see, e.g., \cite{DH}, Corollary 8.12.2), requiring random realizers to work with probability $1$ leads back to classical realizability. 

In this paper, we combine the two variants by considering a randomized version of OTM-realizability. Although the analogue of Sacks' theorem for OTMs is consistent with ZFC by (\cite{CS}, sections 2.1 and 2.3), it turns out that randomized OTM-realizability with probability $1$ is still different from ``plain'' OTM-realizability. This yields the notion of ``almost sure OTM-realizability'' (as-OTM-realizability). 

We find that that as-OTM-realizability enjoys similar features to the concept of OTM-realizability studied in \cite{CGP}: In particular, we will show that as-OTM-realizability is sound for the deduction rules of intuitionistic predicate calculus and that all axioms of Kripke-Platek set theory are as-OTM-realizable, while some axioms of ZFC that go beyond KP are not. Thus, as-OTM-realizability at least captures Lubarsky's intuitionistic KP (IKP), see \cite{Lubarsky}, \cite{IP}.

There does not seem to be a meaningful analogue of Lebesgue measurability, and thus of randomness, for domains that go beyond the real numbers, see, e.g. chapter $5$ of \cite{Wontner}.\footnote{We thank Lorenzo Galeotti for pointing out this reference to us.} Thus, our random oracles will be real numbers. Since the random objects should be on par with the objects in the realm under consideration, we will restrict ourselves in this paper to statements about $H_{\omega_{1}}$, the set of heriditarily countable sets, as precisely these sets can be encoded as real numbers.\footnote{Whether or not there is an interesting theory of OTM-realizability relative to random oracles on arbitrary sets is a question that we postpone to future work.} Thus, unless stated otherwise, quantifiers will range over $H_{\omega_{1}}$ below. We recall that $(H_{\omega_{1}})^{L}=L_{\omega_{1}^{L}}$. 

Most results in this paper are analogous to results obtained in joint work with Robert Passmann and Lorenzo Galeotti for (non-randomized) OTM-realizability \cite{CGP-OTM} and randomized (classical) realizability \cite{CGP}, although our definition of randomized OTM-realizability differs in several respect from what a straightforward combination of these two approaches would look like.

\section{Definitions and basic results}

For the definition of OTMs, we refer to \cite{Koepke}. 
We write $P^{x}(y)\downarrow=z$ to indicate that $P$, when run on input $x$ in the oracle $y$, halts with output $z$; if $x$ or $y$ are omitted, we mean the computation is started on the empty tape or the empty oracle; if $z$ is omitted, we mean that the computation stops without specifying the output. We fix a natural enumeration $(P_{i}:i\in\omega)$ of OTM-programs. Moreover, we fix a natural enumeration $(\phi_{i}:i\in\omega)$ of $\in$-formulas and denote by $\lceil\phi\rceil$ the index of $\phi$ in this enumeration.

For $x$ and $y$ real numbers, we define $x\oplus y$ as $\{2i:i\in x\}\cup\{2j+1:j\in y\}$; more generally, for $x_{0},...,x_{n}$ real numbers, we define $\bigoplus_{i=0}^{0}x_{i}:=x_{0}$ and $\bigoplus_{i=0}^{n}x_{i}:=x_{0}\oplus\bigoplus_{i=1}^{n}x_{i}$ for $n>2$.

In order to pass sets as inputs to OTMs, these need to be encoded as sets of ordinals. This can be done as described in \cite{CarlBook}, Definition 2.3.18. However, since what we are after is a notion of realizability on sets, not set codes, we will require our programs to behave independently of the specific code given to them:

\begin{definition}
    An OTM-program $P$ is \textit{coding-stable} if and only if, for all sets $a_{1},...,a_{n}$, and all sets of ordinals $c_{1}^{0},...,c_{n}^{0}$ and $c_{1}^{1},...,c_{n}^{1}$ coding $a_{1},...,a_{n}$, 
    $P^{c_{1}^{0},...,c_{n}^{0}}$ halts if and only if $P^{c_{1}^{1},...,c_{n}^{1}}$ does and moreover, if $c^{0}$ is the output of the first computation and $c^{1}$ is the output of the second computation, then $c^{0}$ and $c^{1}$ encode the same set. 

    We say that $P$ is \textit{safe} if and only if, for all sets of ordinals $x$ and $y$, $P^{x}(y)$ halts.
\end{definition}

One of the technical advantages of coding stability is that we can abuse our notation and confuse sets with their codes in the discussion. Thus, when $P$ is coding-stable and $x$, $y$ are sets, we write $P^{x}(y)$ for the set coded by the output of $P^{x}$ run with $y$ on the input tape; in particular, this means that this computation halts.

The definition of as-OTM-realizability is adapted from the definition of ``big realizability'' in \cite{CGP} to OTMs.

\begin{definition}{\label{main def}}
    For $X\subseteq\mathfrak{P}(\omega)$, denote by $\mu(X)$ the Lebesgue measure of $X$, if it exists. Let $\mathcal{B}$ denote the set of $X\subseteq\mathfrak{P}(\omega)$ such that $\mu(X)=1$. 

    We will define what it means for a pair $(P,\vec{p})=(P,(p_{1},...,p_{n}))$ to as-OTM-realize an $\in$-statement $\phi(\vec{x},\vec{a})$, where $\vec{x}$ is a list of free variables and $\vec{a}$ is a list of set parameters. $P$ will always be a safe and coding-stable OTM-program, while $p_{1},...,p_{n}$ will be a tuple of (codes for) heriditarily countable sets; for simplicity, we will assume that $p_{1},...,p_{n}$ are always real numbers.\footnote{In order not to clutter our notation with iterated indices, the tuple of parameters will always consist of $n$ entries, although, of course, the number of parameters is variable.}

    If $\phi(\vec{x},\vec{a})$ contains free variables, then $(P,\vec{p})\Vdash_{\text{as}}^{\text{OTM}}\phi(\vec{x},\vec{a})$ if and only if $(P,\vec{p})\Vdash_{\text{as}}^{\text{OTM}}\forall{\vec{x}}\phi(\vec{a})$, where the latter is defined as below. 

     We can thus focus on the case that $\phi$ does not contain free variables. 

    \begin{enumerate}

    \item If $\phi$ is atomic, then $(P,\vec{p})\Vdash_{\text{as}}^{\text{OTM}}\phi(\vec{a})$ if and only if $\phi(\vec{a})$ is true.

    \item If $\phi$ is $\psi_{0}\wedge\psi_{1}$, then $(P,\vec{p})\Vdash_{\text{as}}^{\text{OTM}}\phi(\vec{a})$ if and only if there is a set $\mathcal{O}\in\mathcal{B}$ such that, for all $x\in\mathcal{O}$, $P^{\vec{p},\vec{a},x}(i)\Vdash_{\text{as}}^{\text{OTM}}\psi_{i}(\vec{a})$ for $i\in\{0,1\}$. 

    \item If $\phi$ is $\psi_{0}\vee\psi_{1}$, then $(P,\vec{p})\Vdash_{\text{as}}^{\text{OTM}}\phi(\vec{a})$ if and only if there is a set $\mathcal{O}\in\mathcal{B}$ such that, for all $x\in \mathcal{O}$, $P^{\vec{p},\vec{a},x}(0)$ terminates with output $i\in\{0,1\}$ and $P^{\vec{p},\vec{a},x}(1)\Vdash_{\text{as}}^{\text{OTM}}\psi_{i}(\vec{a})$.\footnote{Here, we deviate conceptually from the definition in \cite{CGP} by allowing that, for different oracles, the disjuncts to be realized may also differ.}

    \item If $\phi$ is $\psi_{0}\rightarrow\psi_{1}$, then $(P,\vec{p})\vdash_{\text{as}}^{\text{OTM}}\phi(\vec{a})$ if and only if, for every as-OTM-realizer $s$ of $\psi_{0}(\vec{a})$, there is a set $\mathcal{O}\in\mathcal{B}$ such that, for all $x\in \mathcal{O}$, $P^{\vec{p},\vec{a},x}(s)\Vdash_{\text{as}}^{\text{OTM}}\psi_{1}(\vec{a})$.\footnote{Intuitively, the random real helping with the realization is only chosen after the evidence for the premise of the implication was obtained. This again deviates from the definition in \cite{CGP}.}

    \item If $\phi$ is $\neg\psi$, then $(P,\vec{p})\Vdash_{\text{as}}^{\text{OTM}}\phi(\vec{a})$ if and only if $(P,\vec{p})\Vdash\phi\rightarrow(1=0)$. 

    \item If $\phi$ is $\exists{y}\psi(y)$, then $(P,\vec{p})\Vdash_{\text{as}}^{\text{OTM}}\phi(\vec{a})$ if and only if there is $\mathcal{O}\in\mathcal{B}$ such that, for all $x\in\mathcal{O}$, we have: $$P^{\vec{p},\vec{a},x}(0)\Vdash_{\text{as}}^{\text{OTM}}\psi(P^{\vec{p},\vec{a},x}(1),\vec{a}).$$

    \item If $\phi$ is $\forall{y}\psi$, then $(P,\vec{p})\Vdash_{\text{as}}^{\text{OTM}}\phi(\vec{a})$ if and only if, for every set $b$, there is $\mathcal{O}_{b}\in\mathcal{B}$ such that, for all $x\in\mathcal{O}_{b}$, we have that $P^{\vec{p},\vec{a},b,x}\Vdash\psi(b)$.

    \item Bounded quantifiers will be interpreted in the usual way -- i.e., $\exists{x\in a}\phi(x)$ as $\exists{x}(x\in a \wedge \phi(x))$ and $\forall{x\in a}\phi(x)$ as $\forall{x}(x\in a\rightarrow\phi(x))$ -- and interpreted accordingly using the above clauses.
    \end{enumerate}

    In all clauses expect (1), some input $\xi$ is given to the program. We call this a \textit{relevant input} for $(P,\vec{p})$ and $\phi$ (thus, e.g. in the case of conjunction, the relevant input can be $0$ or $1$). For the sake of uniformity, we make the convention that the relevant input for atomic formulas is $0$.

    In the cases (2), (3) and (6), the set $\mathcal{O}$ is called the \textit{success set} of $(P,\vec{p})$ for $\phi$; in (4), (5) and (7), $\mathcal{O}_{z}$ is called the $z$-\textit{success set} of $(P,\vec{p})$ for $\phi$, denoted $\text{succ}^{z}_{(P,\vec{p})}(\phi)$. Again, in order to allow for a uniform treatment of all cases, we speak of the $\xi$-success set of $(P,\vec{p})$ for $\phi$ in all cases, where $\xi$ is a relevant input. In the cases (2), (3) and (6), $\text{succ}_{(P,\vec{p})}^{\xi}(\phi)$ 
    will thus not depend on $\xi$. By convention, $\text{succ}^{\xi}_{(P,\vec{p})}(\phi)=\mathfrak{P}(\omega)$ for atomic $\phi$. 
    The formula $\phi$ will not be mentioned if it is clear from the context, which is usually the case, in which case will talk about the $\xi$-success set of $(P,\vec{p})$ and drop the argument $\phi$.

    We say that a formula $\phi(\vec{a})$ with parameters $\vec{a}$ is as-OTM-realizable if and only if there is an OTM-program $P$ such that $(P,\vec{a})\Vdash_{\text{as}}^{\text{OTM}}\phi(\vec{a})$.\footnote{Thus, the program is initially only allowed to access those parameters contained in the formula, but not others. Note, however, that, in the clause for implication, the as-OTM-realizers for the antecedent are allowed to use arbitrary parameters.}
   
\end{definition}

Dropping all mentions of oracles in this definition, one obtains the definition of \textit{plain} OTM-realizability, a variant of which was discussed in \cite{CarlNote}, and another variant for infinitary logic in \cite{CGP-OTM}.

\begin{lemma}{\label{definability of realizability}}
For each $\in$-formula $\phi$ and each tuple $\vec{p}$ of real numbers, the following are true:

\begin{enumerate}
    \item The set of as-OTM-realizers of $\phi(\vec{p})$ is analytic in $\vec{p}$; in other words, the statement ``$r\Vdash_{\text{as}}^{\text{OTM}}\phi(\vec{p})$ is $\Sigma^{1}_{n}$ in the parameter $\vec{p}$ for some $n\in\omega$.
    \item For every relevant input $\xi$ and every pair $\tau:=(P,\vec{q})$ with $P$ a safe and coding-stable OTM-program and $\vec{q}$ a parameter, $\text{succ}^{\xi}_{r}(\phi)$ is projective.
\end{enumerate}
\end{lemma}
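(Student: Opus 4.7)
The plan is to prove (1) and (2) simultaneously by induction on the logical complexity of $\phi$. Both statements are controlled by the same quantifier-counting, and every clause of Definition \ref{main def} refers only to the realizability of strict subformulas, so the induction is well-founded. For the base case of an atomic $\phi(\vec a)$, the predicate ``$\phi$ is true in $H_{\omega_1}$'' reduces, via real codes for $\vec a$, to an arithmetic statement in those codes; hence the realizer set is either everything or empty (depending on truth), and by convention $\operatorname{succ}^{\xi}$ is all of $\mathfrak{P}(\omega)$. In particular both are projective in $\vec p$.

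The auxiliary fact powering the inductive step is the classical observation that the relation ``$P^{\vec p,\vec a,x}(i)\!\downarrow = y$'', with $\vec p,\vec a,x,y$ real and $i$ an integer, is expressible by a projective formula in its arguments: OTM-halting relative to a real oracle is $\Sigma_1$ over $L[\text{oracle}]$, which unfolds to a projective predicate on reals. Combining this with the inductive hypothesis, in each clause of Definition \ref{main def} the set
\[
\bigl\{x \,:\, P^{\vec p,\vec a,x}(\xi) \text{ halts and its output as-realizes the relevant subformula}\bigr\}
\]
is projective, yielding (2) immediately in clauses (2), (3), (6) and, parametrically in $z$, in clauses (4), (5), (7). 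For (1) one further asks that this set has measure one; the condition ``$X$ is Lebesgue measurable with $\mu(X)=1$'' is expressible over a projective $X$ by the projective statement ``there exists a Borel $B\subseteq X$ with $\mu(B)=1$, and $X\setminus B$ lies in a Borel null set''. The clauses for implication, negation and unbounded universal quantifier prepend a universal real quantifier (over as-realizers of the antecedent, respectively over real codes of hereditarily countable sets); by the inductive hypothesis the matrix is already projective, so this bumps the projective level by a bounded finite amount. Iterating along the structure of $\phi$ gives a finite $n=n(\phi)$ bounding the complexity.

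The main obstacle is the interaction with Lebesgue measure: in ZFC, projective sets need not be Lebesgue measurable, so one must read the condition ``$\mathcal{O}\in\mathcal{B}$'' (as the paper does via ``if it exists'') as the conjunction of measurability and measure one and verify that this conjunction is itself projective, as sketched above. A second, more bookkeeping-heavy point is that in clauses (6) and (7) the subformula $\psi$ receives as argument $P^{\vec p,\vec a,x}(1)$ or a fresh real code for $b$; one has to check that substituting these oracle-dependent reals into the projective realizability predicate for $\psi$, furnished by the inductive hypothesis, does not destroy projectivity. This is routine once one uses that the inductive hypothesis gives the predicate ``$s\Vdash_{\text{as}}^{\text{OTM}}\psi(y,\vec a)$'' as projective in $(s,y,\vec a)$ uniformly, so substitution along a projective function of $x$ preserves projectivity. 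With these two points in hand the induction closes and yields the uniform projective bounds claimed in (1) and (2).
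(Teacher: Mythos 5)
The proposal follows the paper's own strategy: a simultaneous induction on $\phi$, establishing (1) and (2) together, reducing ``$\mu(\mathcal{O})=1$'' to the existence of a (Borel/$G_\delta$) null set covering the complement and using the fact that halting OTM-computations with real oracles are coded by reals (equivalently, $\Sigma_1$ over $L[\text{oracle}]$, hence projective) so that each clause of Definition~\ref{main def} adds only a bounded number of real quantifiers. Your two flagged subtleties (measurability of projective sets in ZFC, and substitution of oracle-dependent reals into the inductively projective predicate) are real but are handled implicitly in the paper by the same mechanism; the argument and its decomposition are essentially identical.
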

\begin{proof}
    We prove this by a simultaneous induction on formulas, ignoring the parameter $\vec{p}$. 

    \begin{itemize}
        \item Suppose that $\phi$ is atomic. Then $r\Vdash_{\text{as}}^{\text{OTM}}\phi$ is equivalent to $\phi$, which is $\Sigma_{1}^{0}$; thus, we have (i). Moreover, we have $\xi-\text{succ}_{r}(\phi)=\mathfrak{P}(\omega)$ whenever $r\Vdash_{\text{as}}^{\text{OTM}}\phi$, which is clearly projective.
        \item Suppose that $\phi$ is $(\psi_{0}\wedge\psi_{1})$. 
        Then $r_{\text{as}}^{\text{OTM}}\phi$ is equivalent to the statement ``There exists a set $\mathcal{O}$ of measure $1$ such that, for all $x\in\mathcal{O}$, there are OTM-computations $r^{x}(0)\downarrow=r_{0}$, $r^{x}(1)\downarrow=r_{1}$ satisfying $r_{0}\Vdash_{\text{as}}^{\text{OTM}}\psi_{0}$ and $r_{1}\Vdash_{\text{as}}^{\text{OTM}}\psi_{1}$''. By induction, the statements $r_{0}\Vdash_{\text{as}}^{\text{OTM}}\psi_{0}$ and $r_{1}\Vdash_{\text{as}}^{\text{OTM}}\psi_{1}$ are $\Sigma^{1}_{n}$, for some $n\in\omega$. The existence of halting OTM-computations is also thus expressible, due to the fact that halting OTM-computations on real inputs will be countable, and thus themselves encodeable as real numbers. Finally, stating the existence of a set $\mathcal{O}$ of measure $1$ amounts to stating that there is a null set $\overline{\mathcal{O}}$ such that the claim in question holds for all $x\notin\overline{\mathcal{O}}$; but the existence of a null set can be expressed as in the proof of Lemma \ref{probable halting is sigma2} below by stating the existence of a real number coding an appropriate sequence of interval sequences. Thus, we have (i). To see (ii), let $\tilde{r}$ be the OTM-program that, in the oracle $x$, simulates $r^{x}(i)$, for $i\in\{0,1\}$. Thus, for all $\xi$, we have that $\text{succ}^{\xi}_{r}(\phi)=\text{succ}^{\xi}_{r_{0}}(\psi_{0})\cap\text{succ}^{\xi}_{r_{1}}(\psi_{1})$ is projective as an intersection of two projective sets. 
        \item If $\phi$ is $(\psi_{0}\vee\psi_{1})$, we proceed as in the last case, replacing ``and'' with ``or'' in (i) and ``$\cap$'' with ``$\cup$'' in (ii).

    \end{itemize}
In the remaining cases, the techniques explained in the conjunction case can be straightforwardly applied, in combination with the respective inductive assumptions, to express the respective clauses of the definition in the desired way.
\end{proof}

\begin{lemma}{\label{pi2-truth}}(Cf. \cite{CGP-OTM}, Lemma $21$-$23$)

There is an OTM-program $P_{\text{eff}-\Pi_{2}}$ that, for a true formula $\phi$ of the form $\forall{x}\exists{y}\psi$, where $\psi$ is $\Delta_{0}$, $P_{\text{eff}-\Pi_{2}}(\lceil\phi\rceil)$ outputs a plain OTM-realizer for $\phi$ which is also an as-OTM-realizer for $\phi$. 

Consequently, the following are equivalent for such $\phi$:
    
\begin{enumerate}
    \item $\phi$ is plainly OTM-realizable. 
    \item $\phi$ is as-OTM-realizable.
    \item $\phi$ is true.
\end{enumerate}

\end{lemma}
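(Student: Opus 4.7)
The plan is to reduce everything to the OTM-decidability of true $\Delta_{0}$-formulas, which in the OTM setting is exactly the content of the lemmas from \cite{CGP-OTM} being cited. On input $\lceil \phi \rceil$ with $\phi = \forall x \exists y\, \psi(x,y)$ a true formula and $\psi$ being $\Delta_{0}$, the program $P_{\text{eff}-\Pi_{2}}$ first decodes $\phi$ and then outputs the index of a uniformly defined program $Q_{\phi}$. On parameter $b$ and relevant input, $Q_{\phi}$ enumerates the constructible universe level by level, i.e.\ runs through the sets $y \in L_{\alpha}$ in the canonical $L$-order, and tests the $\Delta_{0}$-predicate $\psi(b,y)$ on each; since OTMs decide $\Delta_{0}$-truth uniformly and since $\phi$ is true, this search terminates with a least witness $y_{b}$. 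On relevant input $1$, $Q_{\phi}$ outputs (a code for) $y_{b}$; on relevant input $0$, it invokes the subroutine described next to produce a realizer of $\psi(b,y_{b})$.

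The realizer for the true $\Delta_{0}$-instance $\psi(b,y_{b})$ is built by a recursion on the logical structure of $\psi$ that is uniform in $\lceil \psi \rceil$: atomic subformulas are realized vacuously by clause (1); conjunctions and disjunctions are handled by assembling or selecting realizers of the appropriate true subformulas; implications and negations of $\Delta_{0}$-formulas either have a false premise, in which case any default index vacuously satisfies the clause, or a true conclusion realizable by induction; and bounded quantifiers are unfolded via clause (8) and resolved by an OTM-search over the bounding set. Since no step of the construction consults the oracle, every success set can be taken equal to $\mathfrak{P}(\omega) \in \mathcal{B}$, so the resulting pair realizes $\phi$ both in the plain sense (with the oracle clauses dropped) and in the as-sense of Definition~\ref{main def}; this yields the first assertion of the lemma.

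For the three-way equivalence, (3) $\Rightarrow$ (1) is the construction just given, and (1) $\Rightarrow$ (2) is immediate by promoting a plain realizer to an as-realizer via discarding the oracle and setting every success set to $\mathfrak{P}(\omega)$. The direction (2) $\Rightarrow$ (3) rests on the auxiliary observation that as-OTM-realizability of a $\Delta_{0}$-formula implies its truth, which I would establish by a parallel induction on $\Delta_{0}$-complexity, using at each step only that the relevant measure-$1$ success set is nonempty and that the atomic case is by definition equivalent to truth. Granted this, an as-realizer of $\forall x \exists y\,\psi$ produces, for every $b$ and at least one oracle $x$ in the corresponding success set, a witness $y_{b}$ together with an as-realizer of $\psi(b,y_{b})$; the auxiliary fact then gives $\psi(b,y_{b})$, hence $\exists y\,\psi(b,y)$ holds for every $b$, i.e.\ $\phi$ is true.

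The main obstacle I anticipate is the uniformity bookkeeping in the inductive $\Delta_{0}$-realizer construction: one must ensure that $Q_{\phi}$ is coding-stable and safe, that its behavior does not depend on the particular code chosen for the set $b$, and that the recursion runs uniformly in $\lceil \psi \rceil$ through all $\Delta_{0}$-cases, including bounded quantifiers over arbitrary sets. These are the technical issues settled in \cite{CGP-OTM}, Lemmas $21$--$23$, for plain OTM-realizability, and since our construction invokes the oracle only trivially, the extension to as-OTM-realizability is a routine bookkeeping exercise rather than a genuine new difficulty.
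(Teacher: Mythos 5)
The construction you describe has essentially the right shape, but there is a genuine gap in the key termination step. You have $Q_{\phi}$, on parameter $b$, ``enumerate the constructible universe level by level'' and search $L$ for a witness $y$ with $\psi(b,y)$. This fails when $b \in H_{\omega_1}\setminus L$: the truth of $\exists y\,\psi(b,y)$ in $V$ does not give you a witness in $L$. Consider $\psi(b,y) :\equiv \exists z\in y\,(z=b)$, so $\exists y\,\psi(b,y)$ asserts the existence of a set containing $b$; any witness has $b$ in its transitive closure, so if $b\notin L$ the search over $L$ never halts. The paper avoids this by having the realizer, on a real $x$ coding $b$, enumerate $L[x]$ rather than $L$, and then invokes Shoenfield's absoluteness (applied to the $\Sigma^1_2$ reformulation of ``there is a real coding a witness $y$ with $\psi(b,y)$'') to guarantee that a witness appears in $L[x]$. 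This absoluteness argument is exactly what justifies the assertion ``this search terminates,'' which your proposal simply states without support; it is the mathematical heart of the lemma and cannot be dismissed as bookkeeping.

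Your treatment of the three-way equivalence is otherwise reasonable. The direction (2)$\Rightarrow$(3) via the auxiliary fact that a $\Delta_0$ formula is as-OTM-realizable iff it is true is a sensible way to fill in what the paper calls ``immediate''; note that you will in fact need the biconditional (not just realizability $\Rightarrow$ truth) to push the induction through the implication case. Repairing the main construction by searching $L[b]$ (equivalently $L[x]$ for a code $x$ of $b$) and adding the Shoenfield absoluteness argument brings your proof in line with the paper's.
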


We observe that as-OTM-realizability differs from plain OTM-realizability:

\begin{theorem}{\label{difference}}
    There is an $\in$-sentence $\phi$ such that $\not\Vdash_{\text{OTM}}\phi$, but $\Vdash_{\text{as-OTM}}\phi$.
\end{theorem}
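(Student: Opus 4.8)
The plan is to let $\phi$ be the parameter-free sentence asserting ``there is a non-OTM-computable real'', phrased so that the intuitionistic existential quantifier is forced to commit to a definite witness, and then to observe that a random oracle supplies such a witness almost surely whereas no OTM can. Concretely, write $\rho(i,c,y)$ for the natural $\in$-formula expressing ``$c$ codes a halting run of $P_{i}$ on empty input and empty oracle whose output tape codes $y$''. Since a halting OTM-computation with real output runs for countably many steps and visits only countably many cells, such a $c$ is hereditarily countable, and correctness of the snapshot sequence (the successor transitions, the liminf rule at limit stages, and the reading-off of the output) is checkable by quantifiers bounded by $c$ and $\omega$; hence $\rho$ is $\Delta_{0}$. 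Put $\theta(y)\equiv\exists i\,(i\in\omega\wedge\exists c\,\rho(i,c,y))$ (``$y$ is OTM-computable'') and
\[
  \phi\ \equiv\ \exists y\,\bigl(y\subseteq\omega\ \wedge\ \neg\theta(y)\bigr).
\]

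Two facts drive the argument. First, for every OTM-computable real $y_{0}$ the sentence $\theta(y_{0})$ is true and plainly OTM-realizable (hence, by Lemma~\ref{pi2-truth}, as-OTM-realizable): a realizer outputs the correct index $n$, the computation $c_{0}$ obtained by simulating $P_{n}$, and, for the $\Delta_{0}$ sentence $\rho(n,c_{0},y_{0})$ (a trivially-quantified $\Pi_{2}$ sentence), the realizer produced by $P_{\text{eff}-\Pi_{2}}$ of Lemma~\ref{pi2-truth}. Second, for every real $x$ that is \emph{not} OTM-computable, $\theta(x)$ has \emph{no} as-OTM-realizer. To see this I would unfold the definition of an as-realizer of $\theta(x)$ through the nested $\exists i$, the conjunction, and the inner $\exists c$: at each level the relevant success set lies in $\mathcal{B}$ and is therefore nonempty, so I may fix a single oracle at that level and read off a definite value --- a definite $i$, which must satisfy $i\in\omega$ since that $\Delta_{0}$ conjunct is as-realized and hence (Lemma~\ref{pi2-truth}) true, so $i=n\in\omega$; and then a definite $c$ for which $\rho(n,c,x)$ is as-realizable, hence (Lemma~\ref{pi2-truth} again) true. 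But then $x$ is OTM-computable via $P_{n}$, a contradiction.

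Given these facts, $\phi$ is not plainly OTM-realizable: a plain realizer of $\exists y\,(y\subseteq\omega\wedge\neg\theta(y))$ would supply an OTM-computable set $y_{0}$ --- forced to be a (hence OTM-computable) real by the conjunct $y_{0}\subseteq\omega$ --- together with a plain realizer of $\neg\theta(y_{0})=(\theta(y_{0})\to(1=0))$; since $1=0$ is never realizable, $\neg\theta(y_{0})$ is realizable only if $\theta(y_{0})$ has no plain realizer, contradicting the first fact. On the other hand, $\phi$ is as-OTM-realizable: let $P$ be the (safe, coding-stable) program that on oracle $x$ returns $x$ on input $1$ and, on input $0$, returns a code for the program $R_{x}$ which on input $0$ outputs the realizer of ``$x\subseteq\omega$'' given by $P_{\text{eff}-\Pi_{2}}$ and on input $1$ outputs any fixed trivial program; by the second fact the latter vacuously realizes $\neg\theta(x)$ whenever $x$ is not OTM-computable. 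Then for every $x$ in $\mathcal{O}:=\{x\in\mathfrak{P}(\omega):x\text{ is not OTM-computable}\}$ the program $P$ behaves as clause~(6) of Definition~\ref{main def} requires, with the conjunction handled by clause~(2), so $P$ witnesses $\Vdash_{\text{as-OTM}}\phi$ with success set $\mathcal{O}$; and $\mathcal{O}\in\mathcal{B}$ because there are only countably many OTM-programs and each halts on empty input with at most one output, so the OTM-computable reals form a Lebesgue null set.

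The step I expect to be the main obstacle is the second fact above: one must check that the extra latitude built into Definition~\ref{main def} --- clauses~(3) and~(6) allow the disjunct realized, and the existential witness produced, to vary with the oracle --- cannot be exploited to as-realize the false sentence $\theta(x)$. This is precisely where nonemptiness of measure-$1$ sets is used, to collapse the oracle-dependent choices to a single branch before appealing to the $\Delta_{0}$ case of Lemma~\ref{pi2-truth}. The remaining routine-but-fiddly point is the explicit reduction of ``$y$ is OTM-computable'' to an existential quantifier over a genuinely $\Delta_{0}$ matrix, i.e.\ the coding of OTM-computations and the careful treatment of the limit rule.
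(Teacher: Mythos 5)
Your proposal is correct and takes essentially the same route as the paper: the same sentence (existence of a non-OTM-computable real), the same observation that the OTM-computable reals form a Lebesgue null set so that the random oracle itself serves as the existential witness on a measure-one success set, and the same appeal to Lemma~\ref{pi2-truth} for the remaining matrix, together with the same argument that a plain realizer would have to compute a witness and thereby contradict it. The only divergence is that you formalize non-computability as $\neg\exists i\,\exists c\,\rho(i,c,y)$, which forces you to prove the auxiliary claim that $\theta(x)$ has no as-OTM-realizer when $x$ is not OTM-computable (so that $\neg\theta(x)$ is vacuously realized) --- handled correctly by extracting definite witnesses from the nonempty measure-one success sets --- whereas the paper phrases the inner statement positively as a $\forall$-statement with $\Delta_{0}$ kernel, to which Lemma~\ref{pi2-truth} applies directly.
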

\begin{proof}
The proof is an adaptation of the example for the case of randomized Turing-computability in \cite{CGP}, Theorem $14$, which somewhat simplifies in the transfinite setting.\footnote{Also note that the example in \cite{CGP} only guarantees a positive success probability, but not probability $1$.} 

Let $\phi$ be the sentence stating the existence of an OTM-incomputable real number, i.e., ``There is a real number $x$ such that, for all natural numbers $k$ and all ordinals $\alpha$, the following holds: $P_{k}$ does not halt in $\alpha$ many steps with output $x$''.
        
It is clear that $\phi$ is not plainly OTM-realizable; for to realize $\phi$, one would have to compute a real number $x$ witnessing the existential statement on an OTM. However, if $P_{k}$ was a program halting with output $x$ after, say, $\alpha$ many steps, this would contradict the statement $\phi$.

To see that $\phi$ is, however, as-OTM-realizable, note that there are only countably many OTM-computable real numbers (as there are only countably many OTM-programs), so that the set of OTM-incomputable real numbers has measure $1$. For each such real number $x$, the remaining statement that for all $k\in\omega$, all ordinals $\alpha$ and all computations of $P_{k}$ of length $\alpha$, $P_{k}$ does not terminate in $\alpha$ many steps with output $x$ is of the form $\forall{x}\psi$, where $\psi$ is $\Delta_{0}$, so that we can apply Lemma \ref{pi2-truth}.

\end{proof}

We can generalize this argument somewhat:

\begin{definition}
    For a set $x$ of ordinals, $\sigma^{x}$ denotes the first stable ordinal relative to $x$, i.e., the minimal ordinal $\alpha$ such that $L_{\alpha}[x]$ is a $\Sigma_{1}$-submodel of $L[x]$.\footnote{See, e.g., \cite{Barwise}, Theorem $8.2$.}
\end{definition}

We recall some basic information about the function $x\mapsto\sigma^{x}$. 

\begin{lemma}{\label{sigma facts}}
Let $x$ be a set of ordinals. 
\begin{enumerate}
    \item If $x$ is a real number, $\sigma^{x}$ is countable. 
    \item $\sigma^{x}$ is the supremum of the halting times of parameter-free OTMs with oracle $x$. 
    \item $\sigma^{x}$ is the supremum of the ordinals that have OTM-computable codes in the oracle $x$. 
    \item A set of ordinals $y$ is OTM-computable relative to $x$ if and only if $y\in L_{\sigma^{x}}[x]$. 
\end{enumerate}
\end{lemma}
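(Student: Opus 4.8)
The plan is to derive all four items from two ingredients: the classical theory of stable ordinals (see \cite{Barwise}) and Koepke's analysis of OTM-computability --- that a set of ordinals is OTM-computable from $x$ precisely when it lies in $L[x]$, and that a parameter-free OTM-computation with a real oracle, of length $\alpha$, is itself coded by a set of size $|\alpha|+\omega$. The last point is the hinge between the two theories: it makes ``$P^{x}\downarrow$'' (and ``$P^{x}\downarrow=c$'') expressible by a $\Sigma_{1}$-formula over $L[x]$ whose only parameter is $x$, so that the $\Sigma_{1}$-stability of $\sigma^{x}$ may be applied to it.

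For (1) I would first note that $\sigma^{x}$ exists: a routine witness-counting-and-condensation argument gives $L_{\kappa}[x]\prec_{\Sigma_{1}}L[x]$ for every uncountable cardinal $\kappa$ of $L[x]$, so $\sigma^{x}\le\kappa$; and taking a countable $X\prec L_{\lambda}[x]$ (for suitably large $\lambda$) with $x\in X$ and transitively collapsing yields a countable $\Sigma_{1}$-stable ordinal, so $\sigma^{x}<\omega_{1}$. (This is classical and could simply be cited.) I would record two byproducts: the same argument run inside $L[x]$ shows $\sigma^{x}<\omega_{1}^{L[x]}$, so every $\alpha<\sigma^{x}$ is countable in $L[x]$; and $L_{\sigma^{x}}[x]\models\mathrm{KP}$, since $\Sigma_{1}$-collection in $L_{\sigma^{x}}[x]$ is immediate from $\Sigma_{1}$-elementarity in $L[x]$ (stable implies admissible).

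For the ``upper'' halves of (2), (3) and (4): if $P^{x}$ halts, then ``there is a halting computation of $P$ with oracle $x$'' is a true $\Sigma_{1}(x)$-statement over $L[x]$, hence is witnessed already in $L_{\sigma^{x}}[x]$ by stability; since OTM-computations are deterministic, the length of this witness is the genuine halting time, and it is an ordinal of $L_{\sigma^{x}}[x]$, i.e.\ $<\sigma^{x}$. This is (2) upward, and, applied instead to ``$\dots$ whose output codes an ordinal'', it shows every ordinal writable from $x$ is $<\sigma^{x}$, which is ``$\subseteq$'' of (3). For the computability-implies-membership direction of (4): if $P$ computes $y$ from $x$ in $\alpha<\sigma^{x}$ steps, the whole computation is constructed by a $\Sigma$-recursion of length $\alpha$ over $L_{\sigma^{x}}[x]\models\mathrm{KP}$, hence lies in $L_{\sigma^{x}}[x]$, and therefore so does $y$.

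For the converse inequalities I would use that a parameter-free OTM with oracle $x$ can enumerate the levels $L_{\gamma}[x]$ as $\gamma$ runs through its scratch-tape ordinals (Koepke) and can be made to halt at the least stage at which a given $\Sigma_{1}(x)$-condition is witnessed; tracking the stage counter turns such a halting time into a writable ordinal, and one checks that the supremum $\tau$ of the ordinals writable from $x$ satisfies $L_{\tau}[x]\prec_{\Sigma_{1}}L[x]$, so $\tau\ge\sigma^{x}$ and hence $\tau=\sigma^{x}$; the writable ordinals forming an initial segment then gives the converse inequality in (2) and all of $[0,\sigma^{x})$ in (3). For ``$\supseteq$'' of (4): any $y\in L_{\sigma^{x}}[x]$ is the $\beta$-th element of the canonical well-order of some $L_{\alpha}[x]$ with $\alpha,\beta<\sigma^{x}$, and, $\alpha$ and $\beta$ being writable from $x$ by (3), a machine that writes them and runs Koepke's enumeration of $L_{\alpha}[x]$ outputs a code for $y$. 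The step I expect to be the real obstacle is making the ``$\Sigma_{1}(x)$'' claim precise --- verifying that a halting computation on a real oracle is coded by a set small and absolute enough for $\Sigma_{1}$-stability to bite, with genuinely no ordinal parameters intruding (the machines being parameter-free) --- together with the accompanying fact that the writable ordinals of parameter-free OTMs form an initial segment with supremum $\sigma^{x}$. As these are standard in the OTM/admissibility literature, I would cite \cite{CarlBook} (and \cite{Koepke}) and add only the brief remarks needed to relativize to $x$, as befits items flagged as merely ``recalled''.
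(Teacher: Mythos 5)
Your outline is correct and matches the paper, which gives no argument of its own but simply records that (1) is folklore and that (2)--(4) are implicit in Koepke and elaborated in the proof of Lemma 3.5.2 of \cite{CarlBook}; the reflection/condensation argument for (1) and the two-sided comparison you sketch (halting computations as $\Sigma_{1}(x)$ facts over $L[x]$ reflected below $\sigma^{x}$, versus enumerating $L[x]$ to show the writable ordinals have supremum $\sigma^{x}$ and $L_{\tau}[x]\prec_{\Sigma_{1}}L[x]$) is exactly the standard proof found in those references. Citing \cite{Koepke} and \cite{CarlBook} for the technical hinge points, as you propose, is precisely what the paper does.
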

\begin{proof}
    (1) is folklore. (2)-(4) are implicit in the work of Koepke and elaborated, e.g., in \cite{CarlBook}, proof of Lemma 3.5.2.
\end{proof}

\begin{corollary}
Let $\psi$ be the statement ``For each real number $x$, there is a real number $y$ such that $y$ is not OTM-computable from $x$''. Then $\psi$ is not OTM-realizable, but as-OTM-realizable.
\end{corollary}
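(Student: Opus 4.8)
The plan is to get the negative half directly from Theorem~\ref{difference} and the positive half by relativizing its positive half, the only genuinely new ingredient being that, by Lemma~\ref{sigma facts}, for each fixed real the reals computable from it form a Lebesgue null set.

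\emph{$\psi$ is not OTM-realizable.} Write $\psi\equiv\forall x\,\exists y\,\chi(x,y)$, where $\chi(x,y)$ abbreviates ``$y$ is not OTM-computable from $x$''. If $P$ were a plain OTM-realizer of $\psi$, then by the clause for $\forall$ the program obtained from $P$ by prepending the empty oracle would plainly OTM-realize $\exists y\,\chi(\emptyset,y)$. But $\chi(\emptyset,y)$ says precisely that $y$ is OTM-incomputable (the empty oracle being the same as parameter-freeness), so $\exists y\,\chi(\emptyset,y)$ is the sentence $\phi$ of Theorem~\ref{difference}, which is not plainly OTM-realizable -- a contradiction.

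\emph{$\psi$ is as-OTM-realizable.} I would take $P$ to be the program which, when run with the real $b$ handed to it by the outermost universal quantifier together with a random real $w$ on its oracle tape, (i) puts $w$ itself into the witness slot for $\exists y$ (ignoring the additional random real supplied by the existential clause), and (ii) in the accompanying slot outputs the realizer produced by $P_{\text{eff}-\Pi_{2}}$ of Lemma~\ref{pi2-truth} applied to $\lceil\chi\rceil$ with $b,w$ kept available as parameters. Such a $P$ is visibly safe and coding-stable. To verify clause~(7) of Definition~\ref{main def}, fix a real $b$. By Lemma~\ref{sigma facts}(1) the ordinal $\sigma^{b}$ is countable, hence $L_{\sigma^{b}}[b]$ is countable, and so by Lemma~\ref{sigma facts}(4) the set $C_{b}$ of reals OTM-computable from $b$ is countable and therefore Lebesgue-null; put $\mathcal{O}_{b}:=\mathfrak{P}(\omega)\setminus C_{b}\in\mathcal{B}$. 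For every $w\in\mathcal{O}_{b}$ the sentence $\chi(b,w)$ is true and, exactly as in Theorem~\ref{difference}, of the form $\forall k\,\forall\alpha\,\theta$ with $\theta$ a $\Delta_{0}$ formula, so by Lemma~\ref{pi2-truth} the program $P_{\text{eff}-\Pi_{2}}$ returns an as-OTM-realizer of it; thus $\mathcal{O}_{b}$ serves as a $b$-success set for $P$ and $\psi$, and hence $(P)\Vdash_{\text{as}}^{\text{OTM}}\psi$.

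\emph{Expected difficulty.} Beyond Theorem~\ref{difference} there is essentially only bookkeeping: confirming that $P$ can be taken safe and coding-stable (clear, since it merely copies $w$ and calls $P_{\text{eff}-\Pi_{2}}$), that the inner random real introduced by the existential clause is harmless, and that Lemma~\ref{pi2-truth}, although phrased for sentences, applies to $\chi$ with the reals $b,w$ supplied as parameters on the oracle tape rather than as honest constants -- equivalently, one works throughout with the single formula $\chi(x,y)$ and the parameter pair $(b,w)$. The measure-theoretic core -- that the reals OTM-computable from a fixed real are null -- is immediate from Lemma~\ref{sigma facts}(1),(4).
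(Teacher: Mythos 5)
Your proposal is correct and follows essentially the same route as the paper: the negative half specializes $\psi$ at $x=\emptyset$ to reduce to the sentence $\phi$ of Theorem~\ref{difference}, and the positive half uses Lemma~\ref{sigma facts}(1),(4) to see that the reals OTM-computable from a fixed $b$ form a countable (hence null) set, takes the random real itself as the existential witness, and invokes Lemma~\ref{pi2-truth} for the remaining $\Pi_{1}$ matrix. The only difference is that you spell out the bookkeeping (safety, coding-stability, the nested random real) which the paper compresses into ``the rest works as in the proof of Theorem~\ref{difference}''.
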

\begin{proof}
    It follows immediately from Lemma \ref{sigma facts}.4 that $\psi$ is not OTM-realizable, as setting $x=0$ in $\psi$ makes the $\phi$ from the proof of Theorem \ref{difference} a special case of $\psi$. 

    To see that $\psi$ is as-OTM-realizable, note that, by Lemma \ref{sigma facts}.1, $\sigma^{x}$ is a countable ordinal, so that $L_{\sigma^{x}}[x]$ is countable, which means that $\mu(\mathbb{R}\setminus L_{\sigma^{b}}[b])=1$. The rest works as in the proof of Theorem \ref{difference}.
\end{proof}

Assuming $V=L$, we can give an even more natural example, using Corollary $11$ in \cite{CS}, according to which, under $V=L$, the halting problem for OTMs is solvable by an OTM with a random oracle with probability $1$. 

\begin{definition}{\label{halting sets}}
Suppose that $a\subseteq\omega$. 
\begin{itemize}
    \item 
    Define $h_{0}:=a$, $h_{\iota+1}^{a}:=\{i\in\omega:P_{i}^{h_{\iota}^{a}}\downarrow\}$ for $\iota\in\omega$.
We call $h_{\iota}^{a}$ the $\iota$-th iterated OTM-halting problem relative to $a$ or the $\iota$-th OTM-jump of $a$. If $a=\emptyset$, the superscript is dropped.
   \item Define $h^{\text{as},a}:=a$, $h^{\text{as},a}_{n+1}:=\{i\in\omega:\mu(\{x\subseteq\omega:P_{i}^{h_{n}^{\text{as},a},x}\downarrow\})=1\}$ for $n\in\omega$. Thus, in particular $h_{1}^{\text{as},a}$ is the set of OTM-programs that halt relative to $a$ and a randomly chosen oracle $x$ with probability $1$. We call $h_{i}^{a}$ the $i$-th iterated as-OTM-halting problem relative to a.
\end{itemize}
If $a=\emptyset$, the superscript is dropped.
\end{definition}

\begin{definition}
    We say that $b\subseteq\omega$ is as-OTM-computable relative to $a\subseteq\omega$ if and only if there are an OTM-program $P$ and a set $X\in\mathcal{B}$ such that, for all $x\in X$, $P^{x}(a)\downarrow=b$. 
\end{definition}

We note that the jump operator works as expected:

\begin{lemma}{\label{halting not solvable}}
Let $a\subseteq\omega$. 
    \begin{enumerate}
        \item $h_{1}^{a}$ is not OTM-computable relative to $a$.
        \item $h_{1}^{\text{as},a}$ is not as-OTM-computable relative to $a$.
    \end{enumerate}
\end{lemma}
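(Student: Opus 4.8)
Both parts will follow the classical diagonal argument against the halting problem, with the recursion theorem for OTMs (see, e.g., \cite{CarlBook}) supplying a program $Q=P_{e}$ that can refer to its own index $e$. For (1), suppose toward a contradiction that $h_{1}^{a}$ were OTM-computable relative to $a$, say by a program $R$ with $R^{a}(i)\downarrow\in\{0,1\}$ for all $i\in\omega$ and $R^{a}(i)\downarrow=1$ exactly when $P_{i}^{a}\downarrow$. Fix $Q=P_{e}$ such that $Q^{a}$, started on the empty input, first computes $R^{a}(e)$ and then diverges if the output is $1$ and halts if it is $0$. Then $Q^{a}\downarrow$ iff $R^{a}(e)=0$ iff $P_{e}^{a}\uparrow$ iff $Q^{a}\uparrow$, a contradiction. (Alternatively, (1) follows from Lemma \ref{sigma facts}: a code for $h_{1}^{a}$ computable in $a$ would let one run every halting $P_{i}^{a}$ to completion and thereby compute a code for the supremum of the OTM-halting times in $a$, i.e.\ for $\sigma^{a}$ itself, against Lemma \ref{sigma facts}.3; but the direct diagonalization is self-contained.)

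For (2), suppose toward a contradiction that $h_{1}^{\text{as},a}$ were as-OTM-computable relative to $a$, witnessed by a program $R$ and a set $X\in\mathcal{B}$ with $R^{x}(a)\downarrow=h_{1}^{\text{as},a}$ for all $x\in X$. Fix $Q=P_{e}$ which, run on the empty input with an oracle coding the pair $(a,x)$, recovers $a$ and $x$, simulates $R^{x}(a)$, and, if that halts with output a set $S\subseteq\omega$, diverges when $e\in S$ and halts when $e\notin S$. For every $x\in X$ we have $R^{x}(a)=h_{1}^{\text{as},a}$, and hence $Q^{a,x}\downarrow$ iff $e\notin h_{1}^{\text{as},a}$. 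Now split into two cases. If $e\in h_{1}^{\text{as},a}$, then $\mu(\{x:Q^{a,x}\downarrow\})=1$ by the definition of $h_{1}^{\text{as},a}$; but $\{x:Q^{a,x}\downarrow\}$ is disjoint from $X$, hence contained in the null set $\mathfrak{P}(\omega)\setminus X$, hence itself null by completeness of Lebesgue measure, contradicting measure $1$. If $e\notin h_{1}^{\text{as},a}$, then $X\subseteq\{x:Q^{a,x}\downarrow\}$, so $\{x:Q^{a,x}\uparrow\}$ is contained in the null set $\mathfrak{P}(\omega)\setminus X$ and hence null, so $\{x:Q^{a,x}\downarrow\}$ is measurable with measure $1$ and therefore $e\in h_{1}^{\text{as},a}$, again a contradiction. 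Either way (2) follows.

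The step I expect to need the most care is the case analysis in (2): the definition of $h_{1}^{\text{as},a}$ refers to the Lebesgue measure of the halting set $\{x:Q^{a,x}\downarrow\}$, which for an arbitrary program need not be a measurable set, so one might fear having to invoke a separate measurability result (such as the one underlying Lemma \ref{probable halting is sigma2}). The point is that this can be avoided: the diagonalization forces $\{x:Q^{a,x}\downarrow\}$ to lie between $X$ and its complement, and since Lebesgue measure is complete, subsets of the null set $\mathfrak{P}(\omega)\setminus X$ are automatically null, which pins down both the measurability and the value of the halting set in each case. The only remaining obligation is the routine check that the recovery of $a$ from $Q$'s oracle and the simulation of $R$ are performed uniformly enough for the recursion theorem for OTMs to apply.
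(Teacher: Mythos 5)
Your proof is correct and follows essentially the same route as the paper's: a diagonal program that inverts the purported almost-sure decision procedure, followed by the same two-case analysis on whether the diagonal program's halting set has measure $1$, with containment in (or of) the measure-one success set and completeness of Lebesgue measure settling measurability in each case. The only differences are cosmetic: you obtain self-reference via the recursion theorem where the paper simply sets $\hat{H}=P_{k}$ and runs it on input $k$, and you work directly with the single success set $X$ from the definition of as-OTM-computability (also handling the relativized case explicitly) rather than intersecting countably many bitwise success sets as the paper does.
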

\begin{proof}

(1) is folklore and works exactly as for ordinary Turing machines. 

The proof for (2) is also an easy adaptation of the usual argument, but since the involvement of the oracle set somewhat blurs the waters, we give the details for the sake of the reader. 

So suppose that $H$ as-OTM-computes $h_{1}^{\text{as}}$ (we only deal with the unrelativized case, the relativization being straightforward). Thus, for all $i\in\omega$ there is a set $\mathcal{O}_{1,i}\in\mathcal{B}$ such that, for all $x\in\mathcal{O}_{1,i}$, $H^{x}(i)$ halts with output $1$ if and only if $P_{i}^{y}$ halts for all $y$ from some measure $1$ set, and otherwise with output $0$. 
Since countable intersections of measure $1$ sets are again of measure $1$, we can -- by replacing each $\mathcal{O}_{i,1}$ with the intersection of all these sets, if necessary -- assume without loss of generality that all the $\mathcal{O}_{1,i}$ are all equal to a set $\mathcal{O}_{1}$. Modify $H$ to $\hat{H}$ such that, for $i\in\omega$, some $X\in\mathcal{B}$ and all $x\in X$, $\hat{H}^{x}(i)$ halts if and only if it is not the case that $\mu(\{y\subseteq\omega:P_{i}^{y}(i)\downarrow\})=1$. Suppose that $\hat{H}$ is $P_{k}$ and consider the set $S:=\{x\subseteq\omega:\hat{H}^{x}(k)\downarrow\}$.

If $\mu(S)=1$, then $P_{k}^{y}(k)$ halts for all elements of a measure $1$ set, so, by definition of $\hat{H}$, $\hat{H}^{x}(k)$ does not halt for a measure $1$ set of $x$, a contradiction, since $\hat{H}$ is $P_{k}$. 

If $S$ is not of measure $1$ (i.e., not measurable or of smaller measure), then it is not the case that $P_{k}^{y}(k)$ halts for a measure $1$ set of $y$, so $\hat{H}^{y}(k)$ should halt for a measure $1$ set of $y$, again a contradiction.
\end{proof}

\begin{lemma}{\label{probable halting is sigma2}}
The statement $\phi_{\text{as-halts}}(i)$ stating, for $i\in\omega$, that $\mu(\{x\subseteq\omega:P_{i}^{a,x}\downarrow\})=1$ is $\Sigma_{2}$ in the parameter $a$. 
\end{lemma}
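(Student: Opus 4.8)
The plan is to reformulate $\phi_{\text{as-halts}}(i)$ --- the assertion $\mu(\{x\subseteq\omega:P_i^{a,x}\downarrow\})=1$ --- as the \emph{existence of a real coding a null cover of the complement}, and then count quantifiers. Write $A:=\{x\subseteq\omega:P_i^{a,x}\downarrow\}$ and $N:=\{x\subseteq\omega:P_i^{a,x}\uparrow\}=\mathfrak{P}(\omega)\setminus A$. A set of reals lies in $\mathcal{B}$ (is measurable of measure $1$) exactly when its complement is Lebesgue null, and a set is Lebesgue null exactly when it is contained in a $G_\delta$ null set; hence $\phi_{\text{as-halts}}(i)$ is equivalent to
\[
\exists r\ \bigl[\ r\ \text{codes}\ (U^r_n)_n,\quad \forall n\,\bigl(\mu(U^r_n)\le 2^{-n}\bigr),\quad \text{and}\quad N\subseteq{\textstyle\bigcap_n}\,U^r_n\ \bigr],
\]
where each $U^r_n$ is the open set given by the subsequence of $r$ enumerating a set of basic clopen cylinders. (This is exactly the ``real coding a sequence of interval sequences'' alluded to in the proof of Lemma~\ref{definability of realizability}; it also handles uniformly the degenerate case $N=\emptyset$ and the case where $A$ fails to be measurable.) The goal is to show that, modulo one harmless leading existential, the bracketed matrix is $\Pi_1$ over $V$, so that the whole statement is $\Sigma_2$ in $a$.

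First the bookkeeping conjunct: ``$r$ codes such a sequence with $\mu(U^r_n)\le 2^{-n}$ for all $n$'' is arithmetical in $r$ --- being a valid code is a $\Delta_0$ syntactic condition, and if one insists that each $U^r_n$ be presented as a union of pairwise incompatible cylinders whose finite partial rational measures never exceed $2^{-n}$, the measure requirement is a bounded statement --- so this conjunct is $\Pi_1$. The substantive conjunct is $\forall x\subseteq\omega\,\bigl(P_i^{a,x}\uparrow\to\forall n\,x\in U^r_n\bigr)$. I would rewrite the implication as the disjunction $\bigl(\forall n\,x\in U^r_n\bigr)\vee P_i^{a,x}\downarrow$ and use that $P_i^{a,x}\downarrow$ is $\Sigma_1$ over $V$: it asserts the existence of a set $z$, namely the halting computation history of $P_i$ in the oracles $a$ and $x$, and ``$z$ is such a history'' is $\Delta_0$ in $z,a,x,i$ --- the history is a function whose domain is an ordinal (a $\Delta_0$ notion, Foundation being available) and whose successor- and limit-stage transitions follow $P_i$'s table, which are bounded checks. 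Pulling the existential witness $z$ out past the disjunct $\forall n\,x\in U^r_n$ (itself $\Delta_0$ in $x,r$ with $\omega$ as a parameter), the substantive conjunct becomes $\forall x\subseteq\omega\,\exists z\,\chi(z,x,r,a,i)$ with $\chi\in\Delta_0$.

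The crucial point --- and the step I expect to be the real obstacle, since the naive reading makes $\forall x\,\bigl(P_i^{a,x}\uparrow\to\dots\bigr)$ look $\Pi_2$ and would only give $\Sigma_3$ --- is that here $x$ ranges over the \emph{set} $\mathfrak{P}(\omega)$, so the outer universal quantifier may be bounded. Replacing $\forall x\subseteq\omega$ by $\forall x\in p$ with the side condition $p=\mathfrak{P}(\omega)$ (which is $\Pi_1$, being $\forall u\,(u\in p\leftrightarrow u\subseteq\omega)$) and applying the Collection schema to $\forall x\in p\,\exists z\,\chi$ turns it into $\exists Z\,\forall x\in p\,\exists z\in Z\,\chi$, whose matrix is $\Delta_0$. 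Thus the substantive conjunct is $\exists p\,\exists Z\,\bigl[\,p=\mathfrak{P}(\omega)\wedge\forall x\in p\,\exists z\in Z\,\chi\,\bigr]$, i.e.\ $\Sigma_2$. Conjoining with the $\Pi_1$ bookkeeping conjunct and prefixing the original $\exists r$ keeps us in $\Sigma_2$ (a block of existential set-quantifiers over a $\Pi_1$ matrix), which yields the claim: $\phi_{\text{as-halts}}(i)$ is $\Sigma_2$ in the parameter $a$, uniformly in $i\in\omega$. The only points I would want to verify carefully are that Collection is legitimately applied (it is a theorem of $\mathrm{ZF}$) and that ``$P_i^{a,x}\downarrow$'' really does admit the $\Delta_0$-matrix $\Sigma_1$ presentation described above --- this last is where the structure of OTM-computations (the material behind Lemma~\ref{sigma facts}) enters.
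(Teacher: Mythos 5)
Your proposal is correct and takes essentially the same route as the paper: reformulate $\phi_{\text{as-halts}}(i)$ as the existence of a real coding a null set covering the non-halting oracles, note that the coding predicates are of low complexity and that $P_{i}^{a,x}\downarrow$ is $\Sigma_{1}$, and count quantifiers. Your explicit appeal to Collection to bound the halting witnesses under the quantifier $\forall x\subseteq\omega$ (keeping the count at $\Sigma_{2}$ rather than $\Sigma_{3}$) just spells out the step that the paper's ``Thus, the result follows'' leaves implicit.
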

\begin{proof}
$\phi_{\text{as-halts}}(i)$ can be reformulated as ``There is a set $X\subseteq\mathfrak{P}(\omega)$ such that $\mu(X)=0$ and, for all $x\notin X$, we have $P^{a,x}\downarrow$''. A null set can be encoded as a countable sequence of countable sequences of rational intervals, which in turn can be encoded by a real number $z$. The statements that a given real number $z$ codes a null set and, that a given real number $x$ belongs to the coded set are $\Delta_{1}$.
 Thus, the result follows.

\end{proof}

\begin{lemma}{\label{probable halting in L is sigma2-universal}}
    Suppose that $V=L$. Then $h_{1}^{\text{as}}$ is OTM-computable equivalent to $\Sigma_{2}$-truth in $L_{\omega_{1}}$.
\end{lemma}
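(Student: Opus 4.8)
The plan is to show that $h_{1}^{\text{as}}$ and the set of $\Sigma_{2}$-sentences true in $L_{\omega_{1}}$ are each OTM-reducible to the other. For ``$h_{1}^{\text{as}}$ reduces to $\Sigma_{2}$-truth in $L_{\omega_{1}}$'', I would invoke Lemma \ref{probable halting is sigma2} with $a=\emptyset$: there is a fixed $\Sigma_{2}$-formula $\phi_{\text{as-halts}}$ with $i\in h_{1}^{\text{as}}$ iff $V\models\phi_{\text{as-halts}}(i)$. Since $V=L$ we have $H_{\omega_{1}}=L_{\omega_{1}}$, and $\phi_{\text{as-halts}}(\underline{i})$ is absolute between $V$ and $L_{\omega_{1}}$: its existential witness is a real coding a null set (hence hereditarily countable), and its matrix quantifies only over reals and over halting OTM-computations with real oracle, all of which are countable by Lemma \ref{sigma facts}(1)--(2) and hence already belong to $L_{\omega_{1}}$. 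So $i\in h_{1}^{\text{as}}$ iff $L_{\omega_{1}}\models\phi_{\text{as-halts}}(\underline{i})$, and an OTM with an oracle for $\Sigma_{2}$-truth in $L_{\omega_{1}}$ decides $h_{1}^{\text{as}}$ by computing $\lceil\phi_{\text{as-halts}}(\underline{i})\rceil$ from $i$ and querying.

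For the converse I would use Corollary $11$ of \cite{CS}. Under $V=L$ it supplies an OTM-program $Q$ with $\mu(\{x:Q^{x}\text{ decides the OTM-halting problem}\})=1$; since its proof relativizes, for each real $p$ one gets (uniformly in $p$) a program deciding the halting problem relative to $p$ from $x\oplus p$ on a set of $x$ of measure $1$. Combined with Lemma \ref{sigma facts}(4) this yields: for a real $\tilde{u}$, a $\Pi_{1}$-fact of the form ``$L_{\omega_{1}}\models\forall v\,\chi(\tilde{u},v)$'' ($\chi$ bounded) is equivalent to its relativization to $L_{\sigma^{\tilde{u}}}[\tilde{u}]$ --- a $\Sigma_{1}$-counterexample, if any exists, already lies in $L_{\sigma^{\tilde{u}}}[\tilde{u}]$ because $L_{\sigma^{\tilde{u}}}[\tilde{u}]\prec_{\Sigma_{1}}L$ --- and $L_{\sigma^{\tilde{u}}}[\tilde{u}]$ is precisely the class of sets OTM-computable from $\tilde{u}$, so such a $\Pi_{1}$-fact is decidable from the halting problem relative to $\tilde{u}$, hence from a random oracle together with $\tilde{u}$.

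Given this, for a $\Sigma_{2}$-sentence $\psi=\exists u\,\forall v\,\theta$ I would build, uniformly in $\lceil\psi\rceil$, the program $P_{\psi}$ which on oracle $x$ enumerates $L_{\omega_{1}}$ in the canonical well-order $<_{L}$ and, for each candidate $u$ with real code $\tilde{u}$, uses $Q$ with oracle $x\oplus\tilde{u}$ to test whether $L_{\omega_{1}}\models\forall v\,\theta(u,v)$, halting with output $1$ as soon as such a $u$ appears. If $L_{\omega_{1}}\models\psi$, then the $<_{L}$-least witness $u^{*}$ has only countably many $<_{L}$-predecessors (the order $<_{L}$ on $L_{\omega_{1}}$ has order type $\omega_{1}$), so only countably many relativized-halting computations are consulted before $u^{*}$ is reached; intersecting the corresponding measure-$1$ sets of ``good'' $x$ leaves a measure-$1$ set on which $P_{\psi}^{x}$ halts correctly, whence $\mu(\{x:P_{\psi}^{x}\downarrow\})=1$ and $\lceil P_{\psi}\rceil\in h_{1}^{\text{as}}$. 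If $L_{\omega_{1}}\not\models\psi$, then --- modulo the difficulty noted below --- $P_{\psi}^{x}$ diverges on a measure-$1$ set of $x$, so $\mu(\{x:P_{\psi}^{x}\downarrow\})\neq 1$ and $\lceil P_{\psi}\rceil\notin h_{1}^{\text{as}}$; thus $\psi\mapsto\lceil P_{\psi}\rceil$ is the desired reduction.

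The step I expect to be the main obstacle is precisely the case $L_{\omega_{1}}\not\models\psi$: $P_{\psi}^{x}$ must fail to halt for a set of $x$ of measure $1$, but a spurious acceptance of some $u$ occurs whenever $Q$ errs on oracle $x\oplus\tilde{u}$, and as $u$ ranges over all of $L_{\omega_{1}}$ the union of these individually null exceptional sets need not remain null. Closing this gap seems to need either a version of Corollary $11$ of \cite{CS} that is uniform in the parameter --- a single measure-$1$ set of $x$ deciding the halting problem relative to $p$ correctly for \emph{every} real $p$ --- or a reorganisation of the verification consulting only countably many relativized-halting computations (for instance, certifying $\psi$ via a $\Sigma_{2}$-correct countable level $L_{\gamma}$ and exploiting that the pertinent facts about the $<_{L}$-least witness already reflect to a level reachable from the plain halting problem). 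The absoluteness bookkeeping in the first reduction is routine but should be carried out with some care, since ``$P^{x}\downarrow$'' is not literally $\Delta_{1}$ over $L_{\omega_{1}}$.
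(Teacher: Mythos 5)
Your first reduction (from $h_{1}^{\text{as}}$ to $\Sigma_{2}$-truth in $L_{\omega_{1}}$) is fine and is essentially the paper's: it follows from Lemma \ref{probable halting is sigma2} together with the absoluteness of $\phi_{\text{as-halts}}$ to $L_{\omega_{1}}=H_{\omega_{1}}$ under $V=L$. The converse direction, however, contains the gap you yourself flag, and it is fatal as the argument stands: when $L_{\omega_{1}}\not\models\psi$, your program $P_{\psi}$ consults the relativized almost-sure halting solver $Q$ on $x\oplus\tilde{u}$ for $\omega_{1}$ many candidates $u$, and the individually null sets of oracles $x$ on which $Q$ errs for a given parameter $\tilde{u}$ can union up to a non-null (even co-null) set; neither Corollary $11$ of \cite{CS} nor its straightforward relativization provides a single measure-one set of $x$ that is correct for \emph{all} real parameters simultaneously. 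So you cannot conclude that $P_{\psi}$ diverges on a measure-one set, and hence cannot conclude $\lceil P_{\psi}\rceil\notin h_{1}^{\text{as}}$ in the false case; the map $\psi\mapsto\lceil P_{\psi}\rceil$ is therefore not shown to be a reduction. (A secondary, smaller issue: the uniform-in-$p$ relativized version of Corollary $11$ is asserted, not derived.)

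The paper's proof sidesteps the per-candidate oracle use entirely, and this is the missing idea. It reduces $\Pi_{2}$-truth (equivalent to $\Sigma_{2}$-truth by negating and flipping the oracle answer): given $\phi=\forall{y}\exists{z}\psi(y,z)$ with $\psi$ being $\Delta_{0}$, the program in oracle $x$ enumerates $L$ until $x$ appears, takes the first level $L_{\alpha}$ containing $x$, and then performs an honest, unassisted simultaneous search for witnesses $z$ for every $a\in L_{\alpha}$ (countably many), halting once all are found. If $\phi$ holds in $L_{\omega_{1}}$, this halts for every $x$; conversely, if it halts for a measure-one set of $x$, then, since the reals of any fixed $L_{\beta}$ with $\beta<\omega_{1}$ form a null set, the levels $L_{\alpha(x)}$ for good $x$ are cofinal in $\omega_{1}$, so witnesses exist for all elements of $L_{\omega_{1}}$. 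The random real is used only to certify a sufficiently high countable level, so the only measure-theoretic step is this cofinality argument, and the uncountable union of error sets that blocks your construction never arises. This is close in spirit to your own suggested repair via reflection to a countable level, and to make your route work you would in effect have to implement exactly that.
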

\begin{proof}
    One direction is immediate from Lemma \ref{probable halting is sigma2}.
    
    For the other reduction, let $\phi:\Leftrightarrow\forall{y}\exists{z}\psi(y,z)$, where $\psi$ is $\Delta_{0}$. Consider the OTM-program $P$ that, in the oracle $x$, works as follows: Enumerate $L$ until $x$ appears, and let $L_{\alpha}$ be the first $L$-level in which it does. The level $L_{\alpha}$ is countable (and coded by a real number), so we can simultaneously run OTM-computations on all $a\in L_{\alpha}$ searching for some $y$ such that $\psi(a,y)$; the computation will halt once such $y$ have been found for all $a\in L_{\alpha}$. We claim that this computation halts for all $x$ from a set of measure $1$ if and only if $\phi$ is true in $L_{\omega_{1}}$. 

    First, if $\phi$ holds in $L_{\omega_{1}}$, then there is an appropriate $y$ for any $a\in L_{\omega_{1}}$, which will eventually be found, causing the computation to halt. 

    On the other hand, suppose that the computation halts for all $x$ from a set of measure $1$. Then there are cofinally in $\omega_{1}$ many $L$-levels such that corresponding witnesses for the existential quantifiers are found for all of their elements. Consequently, such witnesses exist for all elements of $L_{\omega_{1}}$. 
\end{proof}

\begin{lemma}
Suppose that $V=L$. 
\begin{enumerate}
\item For every $a\subseteq\omega$ and every $n\in\omega$, $h_{n}^{x}$ is as-OTM-computable.
\item Suppose that $V=L$. Then the statement $\phi_{h}$, given by ``Every OTM-program halts or does not halt'' is as-OTM-realizable (but not plainly OTM-realizable).\footnote{The proof follows the proof of Corollary $11$ in \cite{CS}.}
\item More generally, for every $n\in\omega$, the statement ``Every OTM-program in the oracle $h_{n}$ halts or does not halt'' is as-OTM-realizable.
\end{enumerate}

\end{lemma}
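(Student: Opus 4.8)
The plan is to prove (1) first, then derive (2) as a special case, and finally observe that (3) is obtained from the same argument with a trivial relativization. For part (1), I would argue by induction on $n$. The base case $n=0$ is immediate since $h_0^x = x$ is computed by the trivial program in any oracle. For the induction step, suppose $h_n^x$ is as-OTM-computable; I want $h_{n+1}^x = \{i : P_i^{h_n^x}\downarrow\}$ to be as-OTM-computable as well. The key tool is Lemma \ref{probable halting in L is sigma2-universal}: under $V=L$, the first as-OTM-jump $h_1^{\text{as}}$ is OTM-computable-equivalent to $\Sigma_2$-truth in $L_{\omega_1}$. But I actually need $h_{n+1}^x$, the \emph{plain} iterated halting problem, not the as-jump. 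The point is that the plain OTM-halting problem relative to a set $b$ is $\Sigma_1$ over $L_{\sigma^b}[b]$, hence is certainly computable from $\Sigma_2$-truth in $L_{\omega_1}$ (relative to $b$), and by the proof technique of Lemma \ref{probable halting in L is sigma2-universal} — enumerating $L$ until the oracle $x$ appears at some countable level $L_\alpha$, then running the relevant computations on a real code for $L_\alpha$ — this $\Sigma_2$-information is recoverable with probability $1$. So the strategy for the induction step: first use the inductive hypothesis to as-OTM-compute $h_n^x$ (from a measure-one set of oracles), then feed this as the oracle set $b$ into the machine from Lemma \ref{probable halting in L is sigma2-universal} (relativized), which on a further measure-one set of random reals outputs $\Sigma_2$-truth in $L_{\omega_1}[b]$, from which $h_{n+1}^x$ is OTM-computable outright. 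Intersecting the two measure-one sets (countable intersection of measure-one sets is measure-one) gives the desired single measure-one set.

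For part (2), the statement $\phi_h = $ ``every OTM-program halts or does not halt'' is the instance-wise disjunction $\forall k\,(P_k\downarrow \vee P_k\uparrow)$. To as-OTM-realize the outer universal quantifier, I must, for each $k\in\omega$, produce on a measure-one set of random oracles $x$ a correct bit $i\in\{0,1\}$ and a realizer of the $i$-th disjunct. By part (1) with $n=1$ (or directly by Lemma \ref{probable halting in L is sigma2-universal}), on a measure-one set of $x$ the machine computes $h_1 = \{k : P_k\downarrow\}$; reading off membership of $k$ in this set gives the bit. Then I must realize the chosen disjunct: if $P_k\downarrow$, the disjunct ``$P_k$ halts'' is a true $\Sigma_1$ statement (``there exists an ordinal $\alpha$ and a halting computation of length $\alpha$''), and if $P_k\uparrow$, the disjunct ``$P_k$ does not halt'' is $\Pi_1$, i.e. of the form $\forall\alpha\,\psi$ with $\psi$ being $\Delta_0$ — both fall under the scope of Lemma \ref{pi2-truth}, whose realizers are uniformly computable from the formula index (and in the $\Sigma_1$ case, from the witness, which the machine has also found). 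The non-realizability in the plain sense is exactly the content of Lemma \ref{halting not solvable}.1: a plain realizer of $\phi_h$ would in particular decide, for each $k$, whether $P_k$ halts, i.e. would OTM-compute $h_1$, contradicting that lemma. Part (3) is the same argument run with the oracle $h_n$ in place of $\emptyset$ throughout, using part (1) to supply $h_n$ (and hence $h_{n+1}$) on a measure-one set.

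The main obstacle I anticipate is bookkeeping the interaction between the \emph{plain} iterated jumps $h_n$ and the \emph{as}-jumps $h_n^{\text{as}}$: Lemma \ref{probable halting in L is sigma2-universal} is stated for $h_1^{\text{as}}$, but the lemma to be proved concerns the plain hierarchy $h_n$, so I need to be careful that what I am as-OTM-computing at stage $n+1$ is genuinely $\{i : P_i^{h_n}\downarrow\}$ and not some randomized variant. The resolution is that the plain halting problem relative to any real $b$ sits inside $L_{\sigma^b}[b]$ by Lemma \ref{sigma facts}.4, hence is (boundedly) definable over a \emph{countable} $L$-level coded by a real, and the enumeration-of-$L$-until-the-oracle-appears trick recovers such a level on a measure-one set; so no genuine $\Sigma_2$-strength beyond what Lemma \ref{probable halting in L is sigma2-universal} already provides is needed. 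A secondary point requiring care is that all programs invoked must be safe and coding-stable, and that the measure-one sets produced at each of the finitely many induction stages, together with the ones for the two disjunct-realizations in part (2), are intersected into a single measure-one set — this is routine since only countably (indeed finitely) many such sets arise.
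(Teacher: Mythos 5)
Your proposal is correct and, once the dust settles, takes essentially the paper's route: under $V=L$, enumerate $L$ until the random real appears, which with probability $1$ happens only above $\sigma^{h_{n}}$ (Lemma \ref{sigma facts}), then decide halting relative to $h_{n}$ by simulation against this clock, with Lemma \ref{pi2-truth} realizing the chosen disjunct and the usual halting-problem argument giving non-realizability in the plain sense. One step should be deleted, though: the intermediate claim that the relativized machine of Lemma \ref{probable halting in L is sigma2-universal} ``outputs $\Sigma_{2}$-truth in $L_{\omega_{1}}[b]$'' on a measure-one set of oracles is false as stated, since that would make $\Sigma_{2}$-truth (equivalently $h_{1}^{\text{as}}$, by that same lemma) as-OTM-computable, contradicting Lemma \ref{halting not solvable}(2); the machine there only halts with probability $1$ iff the given sentence is true, which is not a decision procedure. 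Your own ``resolution'' paragraph --- that $h_{n+1}$ is already decidable over any countable $L$-level beyond $\sigma^{h_{n}}$, which the enumeration trick supplies with probability $1$ --- is the actual argument and coincides with the paper's, so the $\Sigma_{2}$-detour is unnecessary.
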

\begin{proof}
\begin{enumerate}
\item 
Again, it is clear that $\phi_{h}$ is not OTM-realizable, since a realizer would map OTM-programs to pairs $(i,r)$, the first component of which indicates whether or not the given program halts, thus solving the OTM-halting problem on an OTM.

Since $\mu(\mathbb{R}\cap(L\setminus L_{\sigma}))=1$, a randomly chosen real number $x$ will, with probability $1$, satisfy $x\notin L_{\sigma}$. Consider the OTM-program $Q$ that enumerates $L$ until it encounters $x$. If $x\notin L_{\sigma}$, this will have a halting time strictly bigger than $\sigma$. To decide, given an OTM-program $P$, whether or not $P$ halts, run $Q^{x}$ and $P$ in parallel and, if $P$ halts, halt with output $0$, but if $Q^{x}$ halts without $P$ having halted, halt with output $1$.

\item We use the same strategy noting that, with probability $1$, the real number $x$ will be such that $h_{n}$ is contained in the minimal $L$-level that contains $x$. 

\end{enumerate}
\end{proof}

\section{Deduction Rules}

We claim that the axioms and rules of intuitionistic predicate calculus, as found in \cite{CGP}, Definition $17$, are sound for as-OTM-realizability in the following strong sense:

\begin{itemize}
    \item If $\mathcal{A}(A_{1},...,A_{n})$ is an axiom scheme using $n$ propositional variables $A_{1},...,A_{n}$, then there is an OTM-program $P_{\mathcal{A}}$ which, on input $\lceil\phi_{1}\rceil,...,\lceil\phi_{n}\rceil$, computes an as-OTM-realizer for the instance $\mathcal{A}(\phi_{1},...,\phi_{n})$. 
    \item If $\{\mathcal{A}_{1},...,\mathcal{A}_{n}\}\vdash\mathcal{C}$ is a deduction rule $\rho$ using $n$ propositional variables $A_{1},...,A_{n}$, then there is an OTM-program $P_{\rho}$ which, on input $(r_{1},...,r_{n})$ with $r_{i}$ an as-OTM-realizer for $\mathcal{A}_{i}$ for $i\in\{1,...,n\}$, computes an as-OTM-realizer $r_{\mathcal{C}}$ for $\mathcal{C}$.
\end{itemize}

The proof will require some preparation. 
We note that, in terms of measure theory, there is no relevant difference between $\bigoplus_{i=0}^{n-1}x_{i}$ and $(x_{0},...,x_{n-1})$:

\begin{lemma}{\label{coding is harmless}}
    For each $n\in\omega$, the map $d_{n}:[0,1]\rightarrow[0,1]^{n}$ defined by $x\mapsto(x_{0},...,x_{n-1})$ where $x=\bigoplus_{i=0}^{n-1}$ is a metric isomorphism for Lebesgue measure. 
\end{lemma}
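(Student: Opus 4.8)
The plan is to show that $d_n$ is a bijection up to a null set, is measurable in both directions, and pushes Lebesgue measure to Lebesgue measure; since a metric isomorphism of measure spaces is exactly such a map (modulo the usual identification of $[0,1]$ with $2^\omega$ carrying the coin-flipping measure), this suffices. First I would reduce to the case $n=2$, since the general case follows by associativity of $\oplus$ together with the fact that a finite composition of metric isomorphisms is again one, and then $d_n$ is, up to the obvious rearrangement of coordinates, an iterate of $d_2$. Throughout I would work on Cantor space $2^\omega$ rather than the real interval $[0,1]$: the standard binary-expansion map identifies $([0,1],\lambda)$ with $(2^\omega,\nu)$ where $\nu$ is the product of uniform measures on $\{0,1\}$, this identification being a metric isomorphism off the countable set of dyadic rationals, so it is harmless to phrase everything in terms of $2^\omega$ and the characteristic functions of subsets of $\omega$.

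Next I would spell out $d_2$ explicitly on $2^\omega$: writing $x\in 2^\omega$ as the characteristic function of $\{i : x(i)=1\}\subseteq\omega$, the map $d_2$ sends $x$ to $(y,z)$ where $y(i)=x(2i)$ and $z(i)=x(2i+1)$ — i.e. it splits the even-indexed and odd-indexed bits. This is manifestly a continuous bijection $2^\omega\to 2^\omega\times 2^\omega$ with continuous inverse (the inverse interleaves the two bit-streams), hence a homeomorphism, and in particular Borel-measurable both ways. To see it preserves measure it is enough, since both $\nu$ and $(d_2)_*\nu$ are Borel probability measures, to check agreement on a generating algebra: the basic clopen cylinders of $2^\omega\times 2^\omega$, which are finite intersections of the sets $\{y(i)=\epsilon\}$ and $\{z(j)=\delta\}$. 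Each such cylinder pulls back under $d_2$ to a basic clopen cylinder of $2^\omega$ fixing finitely many coordinates, and the product structure of $\nu$ gives that its measure is $2^{-k}$ where $k$ is the number of coordinates fixed, which matches the product measure of the original cylinder. By the uniqueness part of the Carathéodory extension theorem (or Dynkin's $\pi$-$\lambda$ theorem), $\nu$ and $(d_2)_*\nu$ agree on all Borel sets, so $d_2$ is measure-preserving; the same computation applied to $d_2^{-1}$ gives measure-preservation in the other direction, and composing with the binary-expansion identifications completes the $n=2$ case.

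Finally I would assemble the general case: $d_{n}$ factors, modulo the fixed permutation of factors coming from the recursive definition $\bigoplus_{i=0}^{n-1}x_i = x_0\oplus\bigoplus_{i=1}^{n-1}x_i$, as $(\mathrm{id}\times d_{n-1})\circ d_2$, and a product of a metric isomorphism with the identity, as well as a coordinate permutation of $[0,1]^n$, is clearly a metric isomorphism; so induction on $n$ finishes it. I do not expect a genuine obstacle here — the statement is essentially bookkeeping — but the one point that deserves care is the passage between $[0,1]$ and $2^\omega$, where the binary-expansion map fails to be injective on the dyadic rationals; this is handled by the standard observation that the dyadic rationals form a Lebesgue-null (indeed countable) set, so the map is a metric isomorphism in the measure-theoretic sense (an isomorphism of the measure algebras, equivalently a bijection after deleting null sets), which is all that is needed for the applications of this lemma to measure-one success sets.
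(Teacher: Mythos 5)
Your argument is correct and follows essentially the same route as the paper's own proof: reduce to $n=2$, verify that the even/odd bit-splitting map sends basic cylinders to products of cylinders of the same total measure, and then invoke uniqueness of Borel measures agreeing on a generating class (you via Dynkin/Carath\'eodory, the paper via a lemma of Bogachev on measures agreeing on open sets), with the general $n$ handled by iteration. Your explicit treatment of the $[0,1]$--versus--$2^{\omega}$ identification and the dyadic-rational null set is a small point of extra care that the paper's proof passes over silently, but it does not change the substance of the argument.
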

\begin{proof}
The proof is a straightforward exercise in measure theory, which we omit here for the sake of brevity. A more detailed argument will appear in a forthcoming paper with Galeotti and Passmann.

\end{proof}

We will thus freely confuse $(x,y)$ and $x\oplus y$ below, and, more generally, $(x_{1},...,x_{k})$ and $\bigoplus_{i=1}^{k}x_{i}$.

In order to guarantee the measurability of the occurring sets, we make a set-theoretical extra assumption. We do not know whether the below results hold in ZFC. 

From now on, we assume projective determinacy (PD)\footnote{Cf., e.g., \cite{Martin}.};
 in particular, this implies that all projective sets are Lebesgue measurable. By Lemma \ref{definability of realizability}, this implies that success sets are always Lebesgue measurable.

\begin{definition}{\label{function product}}
Let $X$ be a set, and let $n\in\omega$. Moreover, for $i\in\{0,...,n\}$, let $f_{i}:X^{i}\rightarrow\mathfrak{P}(X)$.\footnote{Note that we have $X^{0}=\{\emptyset\}$. However, it is easier to directly think of $f_{0}$ as an element of $\mathfrak{P}(X)$, and this is what we will do below.} Then $\tilde{\bigotimes}_{i=0}^{n}f_{i}$ is defined as follows: 

$$\tilde{\bigotimes}_{i=0}^{n}f_{i}=\{(x_{0},...,x_{n}):\forall{i\leq n}x_{i}\in f_{i}(x_{0},...,x_{i-1})\}.$$

We call $\tilde{\bigotimes}_{i=0}^{n}f_{i}$ the \textit{dependent product} of $f_{0},...,f_{n}$. 

\end{definition}

\begin{lemma}{\label{large sections imply largeness}}
\begin{enumerate}
\item Let $A\subseteq[0,1]^{2}$ be measurable, let $X\subseteq[0,1]$ have measure $1$ (as a subset of $[0,1]$) and suppose that, for all $x\in X$, the set $A_{x}:=\{y\in [0,1]:(x,y)\in A\}$ satisfies $\mu(A_{x})=1$ (as a subset of $[0,1]$). Then $\mu(A)=1$.
\item More generally, let $n\in\omega$, $f_{i}:[0,1]^{i}\rightarrow\mathfrak{P}([0,1])$ for $i\in\{0,...,n\}$ such that the following holds for all $k\leq n-1$: There is a $\mathcal{O}_{k}\subseteq[0,1]^{k}$ with $\mu(\mathcal{O}_{k})=1$ and $\mu(f_{k}(x_{0},...,x_{k-1}))=1$ for all $(x_{0},...,x_{k-1})\in\mathcal{O}_{k}$. Then $\mu(\tilde{\bigotimes}_{i=0}^{n}f_{i})=1$. 
\end{enumerate}
\end{lemma}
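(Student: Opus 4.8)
The plan is to prove (1) first via Fubini's theorem and then derive (2) by induction on $n$, where the dependent product is peeled off one coordinate at a time. For (1), since $A\subseteq[0,1]^2$ is measurable, Fubini (equivalently Tonelli, since the integrand is nonnegative) applies: $\mu(A)=\int_{[0,1]}\mu(A_x)\,dx$, where $A_x=\{y:(x,y)\in A\}$. By hypothesis $\mu(A_x)=1$ for every $x\in X$ and $\mu(X)=1$, so $x\mapsto\mu(A_x)$ equals $1$ almost everywhere, whence the integral is $1$. Note that measurability of $A$ is what licenses the use of Fubini; this is exactly why the statement assumes $A$ measurable, and in the application to success sets this measurability will come from Lemma \ref{definability of realizability} together with PD.

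For (2), I would induct on $n$. The base case $n=0$ is just the hypothesis that $\mu(f_0)=1$ (recall $f_0$ is identified with an element of $\mathfrak{P}([0,1])$, and $\mathcal{O}_{-1}$, $k\le n-1=-1$, is vacuous, so there is nothing to check beyond this). For the inductive step, write $\tilde{\bigotimes}_{i=0}^{n}f_i$ as a subset of $[0,1]^{n}\times[0,1]$ by splitting off the last coordinate $x_n$: a tuple $(x_0,\dots,x_n)$ lies in the dependent product iff $(x_0,\dots,x_{n-1})\in\tilde{\bigotimes}_{i=0}^{n-1}f_i$ and $x_n\in f_n(x_0,\dots,x_{n-1})$. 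So with $B:=\tilde{\bigotimes}_{i=0}^{n-1}f_i\subseteq[0,1]^{n}$ and, for $\bar{x}=(x_0,\dots,x_{n-1})\in B$, the section being $f_n(\bar x)$, the set $\tilde{\bigotimes}_{i=0}^{n}f_i$ is precisely $\{(\bar x,x_n):\bar x\in B,\ x_n\in f_n(\bar x)\}$. By the inductive hypothesis $\mu(B)=1$ (the hypotheses for $k\le n-2$ are inherited), and by assumption $\mu(f_n(\bar x))=1$ for all $\bar x\in\mathcal{O}_{n-1}$, a measure-$1$ subset of $[0,1]^{n}$; intersecting with $B$, the section has full measure for all $\bar x$ in a measure-$1$ set. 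Identifying $[0,1]^{n}$ with $[0,1]$ via the metric isomorphism of Lemma \ref{coding is harmless} (so that the two-dimensional Fubini argument of part (1) applies verbatim), we conclude $\mu(\tilde{\bigotimes}_{i=0}^{n}f_i)=1$.

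The one point that needs a little care — and the main obstacle — is \textbf{measurability of the set $\tilde{\bigotimes}_{i=0}^{n}f_i$ (and of the intermediate $B$)}, since Fubini requires the set to be measurable before one may integrate its sections. This is not automatic from "all sections are measurable"; in general one needs the whole set to be measurable. In the intended applications the $f_i$ arise from success sets, which by Lemma \ref{definability of realizability} are projective, hence (under PD) measurable, and the dependent product of finitely many projective-valued maps is again projective, hence measurable; so for the statement as it will be used there is no gap. If one wants the lemma stated purely measure-theoretically, one should add the hypothesis that the $f_i$ are such that $\tilde{\bigotimes}_{i=0}^{n}f_i$ is measurable (e.g., that $\{(\bar x,y):y\in f_k(\bar x)\}$ is measurable for each $k$), which is all the argument uses. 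I would state it with that proviso and remark that projectivity of the relevant $f_i$ guarantees it under PD.
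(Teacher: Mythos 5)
Your proof is correct and follows essentially the same route as the paper: part (1) by Fubini--Tonelli (the paper integrates a characteristic function modified off the measure-one set $X$, you integrate the section measures directly), and part (2) by induction reducing to (1) through the metric isomorphism of Lemma \ref{coding is harmless} (you peel off the last coordinate, the paper the first, which makes your inherited hypotheses slightly cleaner). Your caveat that (2) requires measurability of $\tilde{\bigotimes}_{i=0}^{n}f_{i}$ before Fubini can be applied is well taken: the paper's proof of (2) invokes part (1) without verifying this, and it is precisely the PD assumption together with Lemma \ref{definability of realizability} that supplies measurability in the intended application to success sets.
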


\begin{lemma}{\label{soundness}}
The rules and axioms of intuitionistic first-order logic as given in \cite{Moschovakis}  are sound for as-OTM-realizability.
\end{lemma}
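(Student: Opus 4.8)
The plan is to proceed by running through the list of axiom schemes and deduction rules of intuitionistic first-order logic one at a time, in each case exhibiting the required OTM-program (for an axiom scheme) or the required program transformation (for a rule), and verifying that the output is an as-OTM-realizer. The key technical tool throughout is Lemma \ref{large sections imply largeness}, which lets us combine finitely many ``measure-one success sets'', each possibly depending on the random reals chosen for the previous components, into a single measure-one set on which the composite program works; together with Lemma \ref{coding is harmless}, which assures us that splitting a single random real into a tuple $\bigoplus_i x_i$ does not disturb any measure computation. Lemma \ref{definability of realizability} plus the standing assumption of PD guarantees that all the success sets and realizer sets we manipulate are Lebesgue measurable, so that ``measure one'' is always meaningful.

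First I would dispose of the purely propositional axioms (e.g. $A\to(B\to A)$, $(A\to(B\to C))\to((A\to B)\to(A\to C))$, the conjunction and disjunction introduction/elimination schemes, and $\bot\to A$). For each of these, given the indices $\lceil\phi_1\rceil,\dots,\lceil\phi_n\rceil$, the program $P_{\mathcal{A}}$ writes down a fixed, simple program that shuffles its inputs around according to the BHK reading of the scheme: for instance, for $A\to(B\to A)$ the realizer is the program which, given a realizer $s$ for $\phi_1$, outputs (a code for) the constant program with value $s$; its success sets are all of $\mathfrak{P}(\omega)$, so the measure-one condition is trivial. For modus ponens, $P_\rho$ on input $(r,s)$ — with $r \Vdash \psi_0\to\psi_1$ and $s\Vdash\psi_0$ — produces a program which, in oracle $x$, simulates $r^{\vec p,x}(s)$; by the implication clause there is a measure-one $\mathcal{O}$ on which this is an as-OTM-realizer of $\psi_1$, which is exactly what is required. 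The conjunction/disjunction rules are handled analogously, using intersections (resp. unions) of the two relevant success sets, which remain measurable and of measure one.

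The more delicate cases are the quantifier axioms and rules: $\forall x\,\phi(x)\to\phi(t)$, $\phi(t)\to\exists x\,\phi(x)$, and especially the two rules with side conditions, generalization ($\psi\to\phi(x)$, $x$ not free in $\psi$, hence $\psi\to\forall x\,\phi(x)$) and its existential dual. Here the point is that the definition of $\Vdash_{\mathrm{as}}^{\mathrm{OTM}}$ for $\forall y\,\psi$ asks for a \emph{separate} measure-one set $\mathcal{O}_b$ for each witness $b$, and these need not be uniform; so when transforming a realizer $r$ of $\psi\to\phi(x)$ into one of $\psi\to\forall x\,\phi(x)$, I would, given a realizer $s$ of $\psi$ and a set $b$, run $r^{\vec p, b, x}(s)$ and invoke the instance of the implication clause at the value $b$ to extract $\mathcal{O}_b$. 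The universal-instantiation axiom requires a safe, coding-stable program that, given a realizer $r$ for $\forall x\,\phi(x)$ together with (a code for) a term value $b$, feeds $b$ to $r$; here I must make sure the program is genuinely safe, i.e. halts on \emph{all} oracles and all set-codes, padding with a dummy output where necessary, and that it respects coding-stability — this is routine given that the programs being composed already have these properties and OTM-composition preserves them.

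The main obstacle I anticipate is bookkeeping rather than mathematics: the definition in Definition \ref{main def} threads the random real in a slightly non-standard way — in the implication clause the random real is chosen only after the premise-realizer is fixed, and in the $\vee$ clause the realized disjunct may vary with the oracle — so one must be careful, when composing realizers along a proof tree, that the success set produced at each node really does have measure one given the choices made above it. This is precisely what Lemma \ref{large sections imply largeness}(2) is designed to absorb: at a node with $k$ immediate constituents we split the oracle as $\bigoplus_{i<k} x_i$, use the $i$-th block to drive the $i$-th constituent, and let $f_i$ be the success-set function of that constituent; the inductive hypothesis that each $f_i$ is measure-one on a measure-one set of preceding choices gives, via the lemma, a measure-one success set for the composite. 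I would therefore state and prove once, as a sublemma, a ``composition lemma'' of exactly this shape and then apply it uniformly in every case, which keeps each individual verification to a couple of lines.
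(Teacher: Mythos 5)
Your proposal is correct and follows essentially the same route as the paper: a case-by-case verification in which each composite realizer splits its random oracle into blocks $\bigoplus_i x_i$, one per composition step, and the measure-one success set of the composite is obtained from Lemma \ref{large sections imply largeness} together with Lemma \ref{coding is harmless} and the PD-based measurability from Lemma \ref{definability of realizability}. The ``composition lemma'' you propose to isolate is in effect Lemma \ref{large sections imply largeness}(2), which the paper applies directly in each case (note only that, e.g.\ for modus ponens, the composite must not merely output the realizer $r^{x_0}(s)$ of the conclusion but go on to simulate it on the relevant input with a fresh oracle block, exactly as your general scheme prescribes).
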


\begin{proof}
The proof is too long to be presented here. To show how the arguments work, we show here the proofs for one deduction rule and one axiom scheme as examples.

     \textbf{Rule}: If $a$ has no free occurences in $\psi$, then $\{\phi(a)\rightarrow\psi\}\vdash\exists{a}\phi(a)\rightarrow\psi$ 
    
    Let as-OTM-realizers $r_{\forall}\Vdash_{\text{as}}^{\text{OTM}}(\phi(a)\rightarrow\psi)$ and $r_{\exists}\Vdash_{\text{as}}^{\text{OTM}}\exists{a}\phi(a)$ be given. Moreover, let $x\subseteq\omega$ be a real number. Finally, let $\xi$ be a relevant input for $\psi$. 
    
    Our as-OTM-realizer $Q$ is an as-OTM-program that, given this data, decomposes $x$ into $x=\bigoplus_{i=0}^{3}x_{i}$ and then computes $((r_{\forall}^{x_{0}}(r_{\exists}^{x_{1}}(1)))^{x_{2}}(r_{\exists}^{x_{1}}(0)))^{x_{3}}(\xi)$. 

   If $x$ is such that $x_{1}\in\text{succ}_{r_{\exists}}$, we will have that $r_{\exists}^{x_{1}}(0)\Vdash_{\text{as}}^{\text{OTM}}\phi(r_{\exists}^{x_{1}}(1))$. Let $w:=r_{\exists}^{x_{1}}(1)$. If moreover $x_{0}\in\text{succ}_{r_{\forall}}^{w}(1)$, then $r_{1}:=r_{\forall}^{x_{0}}(w)\Vdash_{\text{as}}^{\text{OTM}}\phi(w)\rightarrow\psi$. On the other hand, we have that $r_{2}:=r_{\exists}^{x_{1}}(0)\Vdash_{\text{as}}^{\text{OTM}}\phi(w)$. So, if $x_{2}\in\text{succ}_{r_{1}}^{r_{2}}$, then $r_{3}:=(r_{\forall}^{x_{0}}(r_{\exists}^{x_{1}}(1)))^{x_{2}}(r_{\exists}^{x_{1}}(0))\Vdash_{\text{as}}^{\text{OTM}}\psi$. Thus, finally, if $x_{3}\in\text{succ}_{r_{3}}^{\xi}$, then $r_{3}^{x_{3}}(\xi)$ is as desired. 

Again by Lemma \ref{large sections imply largeness}, since the occuring success sets have measure $1$ by definition of as-OTM-realizability, the set of $x$ satisfying these conditions has measure $1$. 

\bigskip

\textbf{Axiom Scheme}: $(\phi\rightarrow\chi)\rightarrow((\psi\rightarrow\chi)\rightarrow((\phi\vee\psi)\rightarrow\chi))$

Let $r_{\phi\rightarrow\chi}\Vdash_{\text{as}}^{\text{OTM}}\phi\rightarrow\chi$ be given. Our procedure will map this to the OTM-program $r$ which works as follows: Given $r_{\psi\rightarrow\chi}\Vdash_{\text{as}}^{\text{OTM}}\psi\rightarrow\chi$, output the OTM-program $r^{\prime}$ which has the following function: Given $r_{\phi\vee\psi}\Vdash_{\text{as}}^{\text{OTM}}\phi\vee\psi$, along with $x\subseteq\omega$, first compute $j:=r_{\phi\vee\chi}^{x_{0}}(0)$. Then one of the following cases occurs:

\begin{enumerate}
\item If $j=0$, compute $r_{\phi\rightarrow\chi}^{x_{1}}(r_{\phi\vee\psi}^{x_{0}}(1))$ and output it.
\item If $j=1$, compute $r_{\psi\rightarrow\chi}^{x_{1}}(r_{\phi\vee\psi}^{x_{0}}(1))$ and output it.
\end{enumerate}

If $x$ is such that $x_{0}\in\text{succ}_{r_{\phi\vee\psi}}$, then $j\in\{0,1\}$ and $r_{0}:=r_{\phi\vee\psi}^{x_{0}}(1)$ will be an as-OTM-realizer of the $j$-th element of the disjunction $\phi\vee\psi$. If $j=0$, and $x_{1}\in\text{succ}_{r_{\phi\rightarrow\chi}}^{r_{0}}$, then the the output $r_{\phi\rightarrow\chi}^{x_{1}}(r_{0})$ will be an as-OTM-realizer of $\chi$. Similarly if $j=1$ and $x_{1}\in\text{succ}_{r_{\psi\rightarrow\chi}}^{r_{0}}$. Thus, for each $x_{0}\in\text{succ}_{r_{\phi\vee\psi}}$, the set of corresponding $x_{1}$ has measure $1$. By Lemma \ref{large sections imply largeness}, the set of $x$ satisfying these conditions has measure $1$. 
\end{proof}

\section{Axioms of Set Theory}

We now consider as-OTM-realizability of particular set-theoretical statements. It will turn out that all axioms of KP are as-OTM-realizable, while already $\Sigma_{2}$-comprehension is not.
The axiom of replacement and the axiom of choice are as-OTM-realizable in their usual formulation, which, however, is due to the considerable deviation of the realizability interpretation from the classical semantics.

For the axiom schemes of induction, comprehension and collection, we interpret as-OTM-realizabilty in the same (strong) sense that we used for logical axioms when we prove that a certain statement is as-OTM-realizable; for negative results, we provide a concrete instance that fails to be as-OTM-realizable.

The next result, and its proof, are analogous to Theorem $43$ of \cite{CGP}, where this was done for infinitary intuitionistic set theory.

\begin{lemma}
\begin{enumerate}
    \item The axioms of extensionality, empty set, pairing and union are as-OTM-realizable.
    \item The $\Delta_{0}$-separation scheme is as-OTM-realizable.
    \item The (full) collection scheme is as-OTM-realizable.
    \item The induction scheme is OTM-realizable.
\end{enumerate}
\end{lemma}

        Note that, since $x\setminus y$ is easily computable from $x$ and $y$, the as-OTM-realizability of $\Sigma_{n}$-separation is equivalent to that of $\Pi_{n}$-separation. 

\begin{lemma}{\label{comprehension failure}} [Cf. \cite{CarlBook}, Proposition 9.4.4]\footnote{We remark, however, that the argument is different, as the power set operator is not available in the present context.} 
\begin{enumerate}

\item The as-OTM-realizability of the $\Sigma_{1}$-comprehension scheme (and, equivalently, that of the $\Pi_{1}$-comprehension scheme) is independent of ZFC. 
\item (It is provable in ZFC that) the $\Sigma_{2}$-comprehension scheme (and, hence, the $\Sigma_{n}$-comprehension scheme and the $\Pi_{n}$-comprehension scheme for all $n\geq2$) is not as-OTM-realizable.

\end{enumerate}
\end{lemma}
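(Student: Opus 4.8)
The plan is to derive (2) in ZFC from Lemma \ref{halting not solvable}.2, and to obtain (1) by combining a positive argument under $V=L$ with a failure in a model in which the reals are not captured by any hereditarily countable parameter; throughout, the equivalence between $\Sigma_{n}$- and $\Pi_{n}$-comprehension noted above lets us work with whichever form is convenient. For (2), consider the instance of $\Sigma_{2}$-comprehension with $a=\omega$ and the $\Sigma_{2}$-formula $\phi_{\text{as-halts}}$ of Lemma \ref{probable halting is sigma2}, which asserts the existence of $h_{1}^{\text{as}}$. Suppose it were as-OTM-realizable; unwinding the clauses for $\forall a$, $\exists b$ and $\forall j$ exactly as in the soundness proof of Lemma \ref{soundness}, a realizer yields, for a measure-one set of oracles $x$, a set $b(x)$ together with a realizer of the biconditional $\forall j\,(j\in b(x)\leftrightarrow(j\in\omega\wedge\phi_{\text{as-halts}}(j)))$. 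Reading off the two implications of this biconditional (recalling that the realizer of an atomic statement merely certifies its truth) shows that $b(x)=\{j\in\omega:\phi_{\text{as-halts}}(j)\text{ is as-OTM-realizable}\}$, so $b(x)$ is in fact a fixed set $B$, and $B$ is as-OTM-computable. It then suffices to show $B=h_{1}^{\text{as}}$, i.e. that for $j\in\omega$ the sentence $\phi_{\text{as-halts}}(j)$ is as-OTM-realizable if and only if it is true, which yields the desired contradiction with Lemma \ref{halting not solvable}.2. One direction is routine: writing $\phi_{\text{as-halts}}(j)$ as $\exists Y(\mu(Y)=0\wedge\forall x(x\notin Y\rightarrow P_{j}^{x}\downarrow))$ with $\Delta_{0}$ matrix, any realizer must output a genuine null set $Y_{0}$ together with a realizer of the $\Pi_{1}$-matrix, which by (the proof of) Lemma \ref{pi2-truth} exists only when the matrix is true; hence $Y_{0}$ witnesses the truth of $\phi_{\text{as-halts}}(j)$. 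For the converse, suppose $P_{j}$ as-halts, so $N_{j}:=\{x:P_{j}^{x}\uparrow\}$ is null; on random oracle $r$ one as-OTM-computes a countable ordinal $\gamma(r)$ that, with probability one, exceeds any prescribed countable ordinal, and outputs the null set $N_{j}^{\gamma(r)}$ of oracles on which $P_{j}$ has not halted within $\gamma(r)$ steps, together with the canonical realizer of the (true) $\Pi_{1}$-matrix from Lemma \ref{pi2-truth}. I expect the main obstacle in (2) to be the verification that this works, i.e. that $\mu(N_{j}^{\gamma})$ already vanishes at some countable stage $\gamma$ rather than only in the limit; this is a statement about $\omega_{1}$-indexed decreasing sequences of measurable sets, and I anticipate needing the regularity supplied by PD.

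For the positive half of (1), assume $V=L$. Given $a$ and a $\Sigma_{1}$- (or, dually, $\Pi_{1}$-) formula $\phi$ with $\lceil\phi\rceil$ on the input tape, the realizer runs, on random oracle $x$, the canonical enumeration of $L$ until $x$ and the relevant parameters appear, at some countable level $L_{\alpha(x)}$, and outputs $b(x):=\{z\in a:L_{\alpha(x)}\models\phi(z)\}$ together with, for each $z\in b(x)$, the realizer of $\phi(z)$ read off from $L_{\alpha(x)}$ (in the $\Sigma_{1}$-case the witness found at that level is OTM-computable from $x$; in the $\Pi_{1}$-case use Lemma \ref{pi2-truth}); the backward implications of the biconditional and the surrounding $\forall a\,\exists b\,\forall z$ are assembled as in Lemma \ref{soundness}, using Lemma \ref{large sections imply largeness}. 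Since $a$ is countable there is a countable $\beta^{*}$ such that for all $\beta\geq\beta^{*}$ and all $z\in a$ the level $L_{\beta}$ already decides $\phi(z)$ correctly with respect to truth in $L_{\omega_{1}}=H_{\omega_{1}}$ — by upward persistence of $\Sigma_{1}$ (resp. downward persistence of $\Pi_{1}$) together with the reflection of true $\Sigma_{1}$-statements to countable levels — and since $\{x:\alpha(x)\leq\beta^{*}\}=\mathbb{R}\cap L_{\beta^{*}}$ is countable, hence null, these conditions hold on a set of measure one; the same reflection argument shows that, under $V=L$, a $\Sigma_{1}$-sentence is as-OTM-realizable precisely when it is true, which is what makes $b(x)$ the correct set. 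Note in particular that the success sets occurring here are determined in a Borel way from $x$, so no appeal to PD is needed for this direction.

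For the failure half of (1), I would move to a model in which $\omega_{1}$ is inaccessible to reals — for instance a Levy collapse of an inaccessible over $L$ — so that no single hereditarily countable parameter captures all reals and, crucially, a random real can only search a bounded (countable) part of $H_{\omega_{1}}$ rather than all of it. In such a model the separating set demanded by a realizer is the genuinely projective object $\{z\in a:\phi(z)\text{ is as-OTM-realizable}\}$, which need no longer coincide with $\{z\in a:\phi(z)\text{ true}\}$, and one exhibits a $\Sigma_{1}$-formula $\phi$ and a countable $a$ for which this set is not as-OTM-computable from the available parameters — e.g. by arranging it to code $\Sigma_{1}$-truth over $H_{\omega_{1}}$, or a suitable master code, that in such a model outstrips everything an OTM can compute even with a random oracle. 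Pinning down the model and, more importantly, the precise failing instance together with its non-computability is, I expect, the bulk of the work in (1).
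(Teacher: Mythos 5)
Your choice of instance for (2) ($X=\omega$, $\phi=\phi_{\text{as-halts}}$, contradiction with Lemma \ref{halting not solvable}.2 via Lemma \ref{probable halting is sigma2}) and your $V=L$ argument for the positive half of (1) (search $L_{\alpha(x)}$ for witnesses, using that with probability $1$ the level of the random real exceeds $\sup\{\sigma^{z}:z\in a\}$) coincide with the paper's proof. Two points diverge. First, in (2) you correctly notice that the extracted set is $\{j:\phi_{\text{as-halts}}(j)\text{ is as-OTM-realizable}\}$ and that one must identify this with $h_{1}^{\text{as}}$, but your route for ``true implies realizable'' -- computing from the random oracle a countable bound $\gamma(r)$ at which $\mu(N_{j}^{\gamma})$ already vanishes -- is both shaky (an increasing $\omega_{1}$-union of Borel sets of measure $<1$ can have measure $1$ in ZFC, and it is unclear how an OTM computes a sufficiently large $\gamma(r)$ without $V=L$-style assumptions) and unnecessary: in the implication clause the realizer of the antecedent may use \emph{arbitrary} real parameters, so when $\phi_{\text{as-halts}}(j)$ is true one may simply take as parameter a Borel code for a measure-one subset of $\{x:P_{j}^{x}\downarrow\}$ (which exists in ZFC by inner regularity, since truth of $\phi_{\text{as-halts}}(j)$ presupposes measurability), output its complement as the existential witness, and realize the $\Pi_{1}$ matrix by Lemma \ref{pi2-truth}. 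With that repair, your (2) is fine and is essentially the paper's argument spelled out in more detail.

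The genuine gap is the negative half of (1). There the paper's key idea is precisely the one your sketch lacks: work in a model $M$ in which, for every real $x$, the set of reals random over $L[x]$ has measure one, so that by Theorem 13 of \cite{CS} the OTM analogue of Sacks' theorem holds in $M$: anything OTM-computable from all elements of a positive-measure set of oracles is already OTM-computable. Taking the $\Sigma_{1}$ instance $\phi(z):\Leftrightarrow P_{z}(0)\downarrow$ with $X=\omega$ (and using Lemma \ref{pi2-truth} to identify realizability with truth for $\Sigma_{1}$ statements), a realizer of this comprehension instance would make $h_{1}$ OTM-computable from almost every oracle, hence plainly OTM-computable, contradicting Lemma \ref{halting not solvable}.1. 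Your proposal stops exactly where this argument starts: you pick a plausible model (the Levy collapse of an inaccessible, which incidentally does satisfy the randomness hypothesis, though no large cardinal is needed) and then assert that some $\Sigma_{1}$-definable set ``outstrips everything an OTM can compute even with a random oracle'' -- but ruling out that a random oracle almost surely computes such a set is the whole difficulty (after all, under $V=L$ a random oracle \emph{does} compute $h_{1}$ almost surely), and you give no substitute for the Sacks-type transfer principle that the paper imports from \cite{CS}. As written, the failure direction of (1) is a plan rather than a proof.
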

\begin{proof}
\begin{enumerate}
    \item 
    \begin{enumerate}
        \item Suppose first that $V=L$. Let $X\in L$ be a set, $\phi\equiv\exists{y}\psi(x,y)$ a $\Sigma_{1}$-formula. Since $X\in (H_{\omega_{1}})^{L}=L_{\omega_{1}}$, there is a (constructibly) countable ordinal $\alpha$ such that $X\subseteq L_{\alpha}$. Let $\beta:=\text{sup}\{\sigma^{x}:x\in X\}$, then $s<\omega_{1}^{L}$. It follows that (in $L$), we have $\mu(\mathbb{R}\cap (L\setminus L_{\beta})=1$. Thus, if $\alpha(z)$ denotes the minimal $\alpha$ with $z\in L_{\alpha}$, then our randomly chosen real $z$ will satisfy $\alpha(z)>\beta$ with probability $1$.        

 Now, given $x\in X$, use $z$ to calculate $L_{\alpha(z)}$ and search $L_{\alpha(z)}$ for a witness $y$ for the statement $\exists{y}\psi(x,y)$. If such a $y$ is found, $x$ belongs to the specified subset. If no such $y$ is found, then, as $\lambda(r)>\sigma^{x}$, none exists and $x$ does not belong to the specified subset. Thus, the as-OTM-realizability of $\Sigma_{1}$-comprehension is consistent relative to ZFC.
        
        \item On the other hand, let $M$ be a transitive model of ZFC such that, for each $x\subseteq\omega$, the set of real numbers generic for random forcing over $L[x]$ has measure $1$. By Theorem $13$ of \cite{CS}, this implies that, in $M$, the analogue of Sacks' theorem holds for (parameter-free) OTMs: If $x\subseteq\omega$ is OTM-computable relative to all elements of a set of positive measure, then $x$ is OTM-computable ($\ast$). Let $r\Vdash_{\text{as}}^{\text{OTM}}\forall{X}\exists{y}\forall{z}(z\in Y\leftrightarrow(z\in X\wedge \phi(z)))$, where $\phi(z):\Leftrightarrow P_{z}(0)\downarrow$. Take $x:=\omega$. Then there is $\mathcal{O}\in\mathcal{B}$ such that, for all $a\in\mathcal{B}$, we have $r^{a}(\omega)=\{i\in\omega:P_{i}(0)\downarrow\}$, i.e., $r^{a}(\omega)=h_{1}$. By ($\ast$), $h_{1}$ is OTM-computable, a contradiction. Thus, in $M$, the $\Sigma_{1}$-comprehension axiom is not as-OTM-realizable.

    \end{enumerate}

    \item To see that $\Sigma_{2}$-comprehension is not as-OTM-realizable, recall from Lemma \ref{probable halting is sigma2} that the formula $\phi_{\text{as-halts}}(i)$, expressing ``$P_{i}^{x}$ halts with probability $1$'' is $\Sigma_{2}$ and that, by Lemma \ref{halting not solvable} $h_{1}^{\text{as}}=\{i\in\omega:\phi_{\text{as-halts}}(i)\}$ is not as-OTM-computable. Thus, the instance of $\Sigma_{2}$-comprehension where $X$ is $\omega$ and $\phi$ is $\phi_{\text{as-halts}}$ is not as-OTM-realizable.

\end{enumerate}
\end{proof}

Since $H_{\omega_{1}}$ does not even contain $\mathfrak{P}(\omega)$, the power set axiom cannnot be meaningfully considered in the present context.

\begin{lemma}
    The axiom of choice,formulated as $$\forall{X}(\forall{y}(y\in X\rightarrow \exists{z}z\in y)\rightarrow \exists{F:X\mapsto\bigcup{X}}\forall{y\in X}F(y)\in y),$$ is as-OTM-realizable.
\end{lemma}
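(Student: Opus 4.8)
The plan is to describe, given an as-OTM-realizer $r$ for the hypothesis $\forall y (y \in X \rightarrow \exists z\, z \in y)$, a uniform OTM-program that produces an as-OTM-realizer for the conclusion $\exists F\, (F : X \to \bigcup X \text{ is a function} \wedge \forall y \in X\, F(y) \in y)$. The crucial observation, exactly as in the classical (set-theoretic) semantics of realizability, is that the realizability interpretation of the axiom of choice is much weaker than the classical statement: the hypothesis already hands us, for each $y \in X$, a way to compute (from a random real) a realizer witnessing $\exists z\, z \in y$, and the first coordinate of the output of that realizer \emph{is} a member of $y$. So there is nothing to choose in the informal sense; the content is packaged into the given realizer.

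Concretely, I would unwind the definition. From $r \Vdash_{\text{as}}^{\text{OTM}} \forall y (y \in X \rightarrow \exists z\, z \in y)$ we get, for each set $b$, a success set $\mathcal{O}_b \in \mathcal{B}$ such that for $x \in \mathcal{O}_b$, $r^{\vec p, \vec a, b, x}$ realizes $(b \in X \rightarrow \exists z\, z \in b)$. When $b \in X$ is genuine, feeding it any realizer $s_0$ of the (atomic, hence trivially realized) statement $b \in X$ yields, on a further measure-one set of reals, a realizer of $\exists z\, z \in b$, whose $0$-component is a member of $b$ and whose $1$-component (call it $z_b$) satisfies $z_b \in b$. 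The program $F$ we want to build is: on input (a code for) $y$, together with a random real $x$ which it splits into sufficiently many independent pieces $x = \bigoplus_i x_i$, it first checks $y \in X$ (atomic, decidable from the codes), runs the chain of computations through $r$ using the pieces of $x$ as the successive random oracles required by the nested success sets, and returns the witness $z_y$. To realize the conjunction in the conclusion we must also realize the statement ``$F$ is a function from $X$ to $\bigcup X$'', which is $\Delta_0$ over the (countable) data once $X$ and the graph of $F$ are available, and hence handled by Lemma~\ref{pi2-truth} together with coding-stability; the two conjuncts are then combined by the standard program that dispatches on its input bit $i \in \{0,1\}$.

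The main technical point — and the one I expect to require the most care — is the bookkeeping of the random oracles and the verification that the overall success set has measure one. Each invocation of $r$ consumes a fresh block of the random real, and the relevant success sets are nested (the success set for the second invocation depends on the realizer produced by the first, which depends on the first block), so this is precisely the ``dependent product'' situation of Definition~\ref{function product}. By Lemma~\ref{large sections imply largeness}, since every individual success set has measure one (by the definition of as-OTM-realizability and, under PD, the measurability guaranteed by Lemma~\ref{definability of realizability}), the dependent product of these conditions again has measure one; moreover this must hold for the $b$-success set \emph{for each} $b$, which is fine since we only ever need one fixed $b = y$ at a time when realizing the $\forall y \in X$ in the conclusion. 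A secondary subtlety is that the conclusion's realizer, when asked about some $y$, must behave well for \emph{every} set $b$ (per clause (7)), not merely for $b \in X$; but for $b \notin X$ the antecedent $b \in X$ has no realizer, so the implication clause (4) is vacuously satisfied, and for $b \in X$ we proceed as above. Coding-stability and safety of the assembled program follow from those of $r$ and the fact that all auxiliary steps (splitting reals, decoding $X$, the $\Delta_0$ checks) are performed by safe coding-stable subroutines.
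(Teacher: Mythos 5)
There is a genuine gap, and it sits exactly where the paper's proof does its real work. To realize the conclusion $\exists F\,(\dots)$ you must, per clause (6) of Definition \ref{main def}, produce on a single measure-one set of oracles a \emph{code for the set} $F$ as the output of $P^{x}(1)$, together with a realizer of the matrix. So the whole graph of $F$ has to be computed relative to one fixed random oracle $x$, and for that oracle the computation $(r^{x_0}(y))^{x_1}(1)$ must land in $y$ \emph{simultaneously for every} $y\in X$. Each such computation is only guaranteed to succeed on a success set $\mathcal{O}_y$ depending on $y$, so you need a single oracle in $\bigcap_{y\in X}\mathcal{O}_y$, and the reason this intersection still has measure one is that $X\in H_{\omega_1}$ is countable. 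This countable-intersection step is the heart of the paper's argument, and your proposal never makes it; worse, you explicitly assert that ``we only ever need one fixed $b=y$ at a time when realizing the $\forall y\in X$ in the conclusion,'' which conflates the inner universal quantifier (where per-$y$ success sets are indeed allowed by clause (7)) with the production of the existential witness $F$ (where they are not). Your program, as described, computes $F(y)$ on demand from $y$, i.e.\ it realizes something like $\forall y\in X\,\exists z\in y\,(\dots)$ rather than exhibiting the set $F$; the passing reference to ``the graph of $F$'' being available is exactly the point that needs proof.

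Relatedly, the dependent-product machinery (Definition \ref{function product}, Lemma \ref{large sections imply largeness}) that you lean on is not the relevant tool here: it handles the nesting of finitely many success sets along one chain of invocations of $r$ for a fixed $y$, not the simultaneity over the countably many $y\in X$. The fix is short and is what the paper does: for each $y\in X$ take the measure-one set $\mathcal{O}_y$ on which $(r^{x_0}(y))^{x_1}(1)\in y$, set $\mathcal{O}:=\bigcap_{y\in X}\mathcal{O}_y$ (measure one since $X$ is countable), and for $x\in\mathcal{O}$ let the realizing program enumerate $X$, compute $(r^{x_0}(y))^{x_1}(1)$ for every $y\in X$, assemble the resulting graph into a code for $F$, and output it as the witness; the remaining conjuncts are then easily realized relative to the same $x$. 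Your remarks about safety, coding-stability, and the vacuous case $b\notin X$ are fine but peripheral.
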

\begin{proof}
Let $X\in H_{\omega_{1}}$, $r\Vdash_{\text{as}}^{\text{OTM}}\forall{y}(y\in x\rightarrow\exists{z}z\in y)$. Thus, for every $x\in X$, there is a set $\mathcal{O}_{x}\in\mathcal{B}$ such that, for all $a_{0}\oplus a_{1}:=a\in\mathcal{O}_{x}$, we have that $(r^{a_{0}}(x))^{a_{1}}(1)\in x$. Since $X\in H_{\omega_{1}}$, $X$ is countable, and so $\mathcal{O}:=\bigcup_{x\in X}\mathcal{O}_{x}$ satisfies $\mu(\mathcal{O})=1$. 
Now, for all $a\in\mathcal{O}$ and all $x\in X$, we have that $(r^{a_{0}}(x))^{a_{1}}(1)\in x$, so we can let $F(x)=(r^{a_{0}}(x))^{a_{1}}(1)$, which will be as desired.

\end{proof}

\section{Conclusion and further work}

We have shown that almost sure realizability with OTMs has the properties that one would expect a notion of set-theoretical realizability to have, while still differing from ``plain'' OTM-realizability; one could say that, set-theoretically, randomness is informative. Given that the axiom of choice is commonly regarded as highly non-constructive, the fact that the axiom of choice is realizable in this sense is somewhat counter-intuitive. This phenomenon, however, is well-known in constructive set theory. For this reason, a number of formulations and variants of choice principles has been defined (see, e.g., \cite{Rathjen}), and it would be worthwhile to see whether these are as-OTM-realizable. Moreover, there is a number of variants of our notion that warrant consideration.

\newpage

\section{Appendix: Proofs}

We give the proofs of some lemmas and theorems that had to be omitted in the paper due to the page limit for the sake of the referees.

Lemma \ref{pi2-truth}:

\begin{lemma}(Cf. \cite{CGP-OTM}, Lemma $21$-$23$)

There is an OTM-program $P_{\text{eff}-\Pi_{2}}$ that, for a true formula $\phi$ of the form $\forall{x}\exists{y}\psi$, where $\psi$ is $\Delta_{0}$, $P_{\text{eff}-\Pi_{2}}(\lceil\phi\rceil)$ outputs a plain OTM-realizer for $\phi$ which is also an as-OTM-realizer for $\phi$. 

Consequently, the following are equivalent for such $\phi$:
    
\begin{enumerate}
    \item $\phi$ is plainly OTM-realizable. 
    \item $\phi$ is as-OTM-realizable.
    \item $\phi$ is true.
\end{enumerate}

\end{lemma}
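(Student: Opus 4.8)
The plan is to construct, uniformly in $\lceil\phi\rceil$, an OTM-program $P_\phi$ that, given a code for a set $b$ on a parameter tape and \emph{ignoring} the random-oracle tape, computes a witness $y_b$ with $\psi(b,y_b)$ together with a realizer of the $\Delta_0$ sentence $\psi(b,y_b)$, and then to observe that such an oracle-indifferent realizer automatically witnesses both plain and almost sure OTM-realizability. First I would describe an auxiliary OTM-subroutine $\mathrm{rlz}$ which, given a code for a true $\Delta_0$ sentence $\chi$ whose parameters are hereditarily countable sets coded by reals, returns a plain OTM-realizer of $\chi$, defined by recursion on $\chi$: for atomic $\chi$ any value (say $0$) works, since such $\chi$ is realized iff true; for $\chi_0\wedge\chi_1$ return a program computing $\mathrm{rlz}(\chi_i)$ on input $i$; for $\chi_0\vee\chi_1$ decide on an OTM which disjunct is true and return a program outputting that index on input $0$ and $\mathrm{rlz}$ of it on input $1$; for $\exists z\in a\,\theta$ search the countably many elements of $a$ for a code of a witness $c$ and return a program outputting a code of $c$ on input $1$ and $\mathrm{rlz}(c\in a\wedge\theta(c))$ on input $0$; for $\forall z\in a\,\theta$, i.e.\ $\forall z(z\in a\rightarrow\theta(z))$, return the program that, given a code for a set $b'$, halts trivially if $b'\notin a$ and otherwise, ignoring the realizer-of-$b'\in a$ input, returns $\mathrm{rlz}(\theta(b'))$. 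All these searches run through hereditarily countable sets given by codes and hence terminate, and $\Delta_0$-truth about coded hereditarily countable sets is OTM-decidable. Then $P_\phi$ works as follows: on a code $c$ for a set $b$ (and whatever is on the random-oracle tape, which is ignored) it enumerates the levels $L_\alpha[c]$ searching for the $<_{L[c]}$-least $y$ with $\psi(b,y)$, calling $y_b$ the result; on input $1$ it outputs a code for $y_b$ and on input $0$ it outputs $\mathrm{rlz}(\psi(b,y_b))$. On inputs or oracles that are not valid codes, $P_\phi$ halts with output $0$, making it safe; it is coding-stable since it is defined purely in terms of the coded sets. Finally $P_{\mathrm{eff}-\Pi_2}$ is the program that on input $\lceil\phi\rceil$ outputs the index of $P_\phi$.

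The substantive point is that the search in $L[c]$ for a $y$ with $\psi(b,y)$ terminates whenever $\phi$ is true. Indeed $\exists y\,\psi(b,y)$ is $\Sigma_1$ over $H_{\omega_1}$ with real parameter $c$, hence $\Sigma^1_2(c)$ by Levy's analysis, so by Shoenfield absoluteness it also holds in $L[c]$; thus there is $y\in (H_{\omega_1})^{L[c]}=L_{\omega_1^{L[c]}}[c]$ with $L[c]\models\psi(b,y)$, and since $\psi$ is $\Delta_0$, hence absolute, $\psi(b,y)$ holds in $V$ too. Correctness of $\mathrm{rlz}$ — that $\mathrm{rlz}(\chi)\Vdash_{\text{OTM}}\chi$ for true $\Delta_0$ $\chi$ — is a routine induction on $\chi$ against Definition \ref{main def}; in the clause for bounded $\forall$ one uses that $b'\in a$ is atomic, so it is realized iff true and the implication clause of Definition \ref{main def} is vacuous when $b'\notin a$. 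Together this gives $P_\phi\Vdash_{\text{OTM}}\phi$: for every $b$, $P_\phi^{c_b}$ computes $y_b$ and a realizer of $\psi(b,y_b)$, which is exactly what clauses (7) and (6) of plain OTM-realizability demand.

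Every realizer produced above — $P_\phi$ itself and every value of $\mathrm{rlz}$ — never consults the random-oracle tape. For such oracle-indifferent realizers, an induction on formula structure shows that the plain clauses of Definition \ref{main def} and the almost sure clauses coincide: in each of the clauses (2)–(7) one may take the required measure-one set $\mathcal{O}$ (resp.\ $\mathcal{O}_b$) to be all of $\mathfrak{P}(\omega)$, because nothing in the computation depends on $x$; the only case needing a word is implication, where the antecedents occurring in our construction are atomic and hence have the same plain and almost sure realizers. Consequently $P_\phi\Vdash_{\text{as-OTM}}\phi$ as well, which proves the first assertion. For the equivalence, $(3)\Rightarrow(1)$ and $(3)\Rightarrow(2)$ are exactly what we have just shown. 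For $(1)\Rightarrow(3)$, suppose $(Q,\vec q)\Vdash_{\text{OTM}}\phi$; by clauses (7) and (6), for every $b$ the computation $Q^{\vec q,c_b}(1)$ halts with a code of some set $y_b$ and $Q^{\vec q,c_b}(0)\Vdash_{\text{OTM}}\psi(b,y_b)$; since plain OTM-realizability of a $\Delta_0$ sentence implies its truth (again an easy induction), $\psi(b,y_b)$ is true, so $\exists y\,\psi(b,y)$ is true, and as $b$ was arbitrary $\phi$ is true. The implication $(2)\Rightarrow(3)$ is identical, using that each measure-one set is nonempty to extract a single oracle at each step and that almost sure realizability of a $\Delta_0$ sentence likewise implies its truth.

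The main obstacle I expect is Step 2: the witness search relies on $\Sigma^1_2$-absoluteness (Shoenfield) to reflect a witness down to $L[c]$, and one must check that the recursive $\mathrm{rlz}$ genuinely produces objects meeting Definition \ref{main def} on the nose (especially the bounded-$\forall$/implication interaction, and the safety and coding-stability bookkeeping). The ``also an as-realizer'' part, by contrast, is the soft observation that oracle-indifference collapses the two notions.
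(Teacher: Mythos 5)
Your proposal is correct and follows essentially the same route as the paper's own proof: enumerate $L[c]$ for a witness $y$ with $\psi(b,y)$, justify termination by Shoenfield absoluteness, realize the $\Delta_{0}$ matrix effectively, and observe that a realizer that never consults the random oracle is automatically an as-OTM-realizer (with success sets $\mathfrak{P}(\omega)$). The only difference is presentational: you spell out the $\Delta_{0}$ base case (the \texttt{rlz} recursion) and the oracle-indifference induction, which the paper delegates to \cite{CGP-OTM}, Lemma~23, and treats as immediate, respectively.
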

\begin{proof}
    This is trivial for the case that $\phi$ itself is $\Delta_{0}$; the details are analogous to those in the proof of \cite{CGP-OTM}, Lemma $23$. We assume from now on that the statement is proved for this case. 

    In the general case, the realizer $r$ will work as follows: Given a real number $x$ coding a set $a$, use the OTM-program $P_{L}$ from \cite{CarlBook}, Lemma $3.5.3$\footnote{Implicit in Koepke \cite{Koepke}, Theorem $2$.} that, on input $x$, enumerates $L[x]$. For each $y$ occuring in the enumeration, check whether $y$ codes a set $b$ such that $\psi(a,b)$ (this is easily possible because $\psi$ only contains bounded quantifiers). Once such $y$ has been found, we compute a realizer $(s,\vec{p})$ for $\psi(a,b)$ and output $(s,(y,\vec{p}))$. 

    In order to see that such $b$ will be found, note that, due to Shoenfield's absoluteness theorem and the fact that, by assumption, $V$ satisfies $\exists{b}\psi(a,b)$, such a set $b$ -- along with a real number coding it -- will exist in $L[x]$.

    The second statement now follows immediately.
\end{proof}

Lemma \ref{coding is harmless}
\begin{lemma}
    For each $n\in\omega$, the map $d_{n}:[0,1]\rightarrow[0,1]^{n}$ defined by $x\mapsto(x_{0},...,x_{n-1})$ where $x=\bigoplus_{i=0}^{n-1}$ is a metric isomorphism for Lebesgue measure. 
\end{lemma}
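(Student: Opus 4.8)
\textbf{Proof plan for Lemma \ref{coding is harmless}.} The plan is to reduce the $n$-dimensional statement to the $2$-dimensional one and then to verify the $2$-dimensional case directly, working with the standard Borel/measure structure on $\mathfrak{P}(\omega)\cong 2^{\omega}\cong[0,1]$ (modulo the measure-zero set of dyadic rationals, which is irrelevant for a metric-isomorphism claim). First I would observe that the map $s:2^{\omega}\times 2^{\omega}\to 2^{\omega}$ sending $(x,y)$ to $x\oplus y=\{2i:i\in x\}\cup\{2j+1:j\in y\}$ is a bijection, and that it carries the product topology to the product topology: a basic clopen set of $2^{\omega}$ determined by finitely many coordinates in the even positions and finitely many in the odd positions is exactly the image of a clopen box $U\times V$. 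Hence $s$ is a homeomorphism, and in particular Borel-bimeasurable. Then I would compute the pushforward: the cylinder $[\sigma]$ in $2^{\omega}$ fixing coordinates $0,\dots,2k+1$ has measure $2^{-(2k+2)}$, and its $s$-preimage is a product of two cylinders of lengths $k+1$ each, of measures $2^{-(k+1)}\cdot 2^{-(k+1)}=2^{-(2k+2)}$; so $s$ preserves the measures of a generating algebra of clopen sets, and by Carathéodory/uniqueness of the measure extension, $s$ pushes the product measure to the uniform measure. This gives the claim for $n=2$.

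Next I would handle general $n$ by induction, using the recursive definition $\bigoplus_{i=0}^{n-1}x_i = x_0\oplus\bigoplus_{i=1}^{n-1}x_i$: the map $d_n$ factors as $(\mathrm{id}\times d_{n-1})\circ d_2$, i.e.\ first split off $x_0$ from the rest via the $n=2$ isomorphism, then apply the inductive isomorphism to the tail. A composition of measure isomorphisms is a measure isomorphism, and the induction base $n=1$ (and the degenerate $n=0$) are trivial, so this closes the argument. Finally, to match the exact wording ``metric isomorphism for Lebesgue measure'', I would transport everything along the standard measure-isomorphism $2^{\omega}\cong[0,1]$ (binary expansion), noting this identification is itself a metric/measure isomorphism off a null set and commutes appropriately, so the composite $d_n:[0,1]\to[0,1]^n$ inherits the property; alternatively one simply works with $[0,1]^n$ equipped with $n$-dimensional Lebesgue measure directly and uses that products of the $1$-dimensional isomorphism behave as expected.

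The only mildly delicate point — and the one I would flag as the main obstacle — is the bookkeeping at the boundary: the bijection $x\mapsto(x_0,\dots,x_{n-1})$ is only a genuine bijection at the level of $2^{\omega}$, whereas on $[0,1]$ one must discard the countable (hence null) set of points with non-unique binary expansions, and one must check that these exceptional sets match up under $d_n$ (they do, since $x$ is eventually constant iff each $x_i$ is). Since a metric isomorphism for Lebesgue measure is only required to be defined off a null set, this causes no real trouble, but it is the place where a fully rigorous write-up has to be careful. Everything else is the routine verification that $s$ preserves a measure-generating algebra together with stability of the notion under composition, which is exactly why the paper defers the detailed argument to forthcoming work and we only sketch it here.
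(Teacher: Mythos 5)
Your proposal is correct and follows essentially the same route as the paper's own argument: verify that the interleaving map preserves the measure of basic cylinder/interval sets and then invoke uniqueness of the measure extension, with the general $n$ handled by the evident induction/composition. Your additional care about the dyadic-rational boundary in the $2^{\omega}\cong[0,1]$ identification is a harmless refinement of what the paper leaves implicit.
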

\begin{proof}

The proof is a straightforward exercise in measure theory, which we omit here for the sake of brevity. A more detailed argument will appear in a forthcoming paper with Lorenzo Galeotti. 

For the sake of the referees, we present here the argument, following the same steps as in version worked out by Galeotti. 

For $s\in\{0,1\}^{<\omega}$, let us denote $|s|$ the length of $s$ and by $I_{s}$ the basic open interval associated with $s$, i.e., the set $\{x\in\{0,1\}^{\omega}:\forall{i<|s|}x(i)=s(i)\}$. 

We first show this for $n=2$, the general case being an obvious generalization. Let $s\in\{0,1\}^{<\omega}$. Then $\mu(I)=2^{-|s|}$. Let $s=s_{0}s_{1}...,s_{k-1}$, and let $s^{0}$, $s^{1}$ be the elements of $\{0,1\}^{<\omega}$ that consist of the bits at even and odd positions of $s$, respectively. In particular, this means that $|s^{0}|+|s^{1}|=|s|$. Then $f[I]=I_{s^{0}}\times I_{s^{1}}$, and so $\mu(f[I])=\mu(I_{s^{0}}\times I_{s^{1}})=\mu(I_{s^{0}})\cdot\mu(I_{s^{1}})=s^{-|s_{0}|}\cdot s^{-|s_{1}|}=s^{-(|s^{0}|+|s^{1}|)}=s^{-|s|}=\mu(I)$. 

Let $\tilde{\mu}$ be the measure on $[0,1]$ defined by $\tilde{\mu}(X)=\mu(f[X])$ for $X\subseteq[0,1]$. Then $\mu$ and $\tilde{\mu}$ are two measure on $[0,1]$ that agree on all basic open sets, and thus on all open sets. By \cite{Bogachev}, Lemma 7.1.2, we have $\tilde{\mu}=\mu$. Thus $\mu(X)=\mu(f[X])$ for all $X\subseteq[0,1]$.

\end{proof}

Lemma \ref{large sections imply largeness}:

\begin{lemma}
\begin{enumerate}
\item Let $A\subseteq[0,1]^{2}$ be measurable, let $X\subseteq[0,1]$ have measure $1$ (as a subset of $[0,1]$) and suppose that, for all $x\in X$, the set $A_{x}:=\{y\in [0,1]:(x,y)\in A\}$ satisfies $\mu(A_{x})=1$ (as a subset of $[0,1]$). Then $\mu(A)=1$.
\item More generally, let $n\in\omega$, $f_{i}:[0,1]^{i}\rightarrow\mathfrak{P}([0,1])$ for $i\in\{0,...,n\}$ such that the following holds for all $k\leq n-1$: There is a $\mathcal{O}_{k}\subseteq[0,1]^{k}$ with $\mu(\mathcal{O}_{k})=1$ and $\mu(f_{k}(x_{0},...,x_{k-1}))=1$ for all $(x_{0},...,x_{k-1})\in\mathcal{O}_{k}$. Then $\mu(\tilde{\bigotimes}_{i=0}^{n}f_{i})=1$. 
\end{enumerate}
\end{lemma}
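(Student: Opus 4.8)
The plan is to derive part (1) from the Fubini--Tonelli theorem and then obtain part (2) by induction on $n$, using part (1) repeatedly together with Lemma \ref{coding is harmless} to pass between $[0,1]^{n}$ and $[0,1]$.

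For part (1): since $A\subseteq[0,1]^{2}$ is measurable, Tonelli's theorem applies to the characteristic function $\mathbf{1}_{A}$, giving
\[
\mu(A)=\int_{[0,1]}\mu(A_{x})\,d\mu(x).
\]
By hypothesis $\mu(A_{x})=1$ for all $x$ in a set $X$ of full measure, so the integrand equals $1$ almost everywhere; hence $\mu(A)=\int_{[0,1]}1\,d\mu=1$. (One should note that measurability of $A$ guarantees measurability of almost every section $A_{x}$, so the integral makes sense; the hypothesis then pins down its value on a conull set.)

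For part (2): I would induct on $n$. The base case $n=0$ is the assertion that $\tilde{\bigotimes}_{i=0}^{0}f_{0}=f_{0}$ has measure $1$, which is exactly the hypothesis for $k=0$ (recall the convention that $f_{0}$ is an element of $\mathfrak{P}([0,1])$). For the inductive step, write
\[
\tilde{\bigotimes}_{i=0}^{n}f_{i}=\bigl\{(x_{0},\dots,x_{n}): (x_{0},\dots,x_{n-1})\in\tilde{\bigotimes}_{i=0}^{n-1}f_{i}\ \wedge\ x_{n}\in f_{n}(x_{0},\dots,x_{n-1})\bigr\}.
\]
View this as a subset $A$ of $([0,1]^{n})\times[0,1]$, identifying $[0,1]^{n}$ with $[0,1]$ via the metric isomorphism $d_{n}$ of Lemma \ref{coding is harmless}, so that Lebesgue measure on $[0,1]^{n}$ corresponds to Lebesgue measure on $[0,1]$ and part (1) is applicable. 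By the inductive hypothesis, the ``base'' set $B:=\tilde{\bigotimes}_{i=0}^{n-1}f_{i}$ has measure $1$ in $[0,1]^{n}$; and for each $(x_{0},\dots,x_{n-1})\in B\cap\mathcal{O}_{n-1}$ — a conull subset of $B$, hence still conull in $[0,1]^{n}$ — the section $A_{(x_{0},\dots,x_{n-1})}=f_{n}(x_{0},\dots,x_{n-1})$ has measure $1$ by the $k=n-1$ hypothesis. Applying part (1) (with $X$ the conull set just described) yields $\mu(A)=1$, i.e. $\mu(\tilde{\bigotimes}_{i=0}^{n}f_{i})=1$.

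The main point requiring care — the ``hard part'', though it is more bookkeeping than difficulty — is the measurability of the sets involved, since part (1) explicitly assumes $A$ measurable and part (2) quantifies over arbitrary $f_{i}$. In the intended application the sets $f_{k}(x_{0},\dots,x_{k-1})$ and the products $\tilde{\bigotimes}_{i=0}^{k}f_{i}$ are success sets, which by Lemma \ref{definability of realizability} are projective, hence Lebesgue measurable under the standing assumption of projective determinacy; so the induction should be carried out in that context, and one should remark that the dependent product of projective (uniformly in the parameters) families is again projective, so measurability is preserved at each stage. With measurability in hand, the Tonelli argument and the induction go through as sketched.
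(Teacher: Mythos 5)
Your proposal is correct and follows essentially the same route as the paper: part (1) via Tonelli's theorem applied to (a version of) the characteristic function of $A$, and part (2) by induction using part (1) together with the metric isomorphism of Lemma \ref{coding is harmless}, the only cosmetic difference being that you peel off the last coordinate where the paper sections along the first. Your added remark that measurability of the dependent products must be secured in the intended application (projective success sets, measurable under PD) is a sensible point that the paper leaves implicit.
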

\begin{proof}
\begin{enumerate}
\item Let $\chi_{A}$ be the characteristic function of $A$; by assumption, $\chi_{A}$ is measurable. Define $f:[0,1]\rightarrow\{0,1\}$ by $$f(a,b)=\begin{cases}\chi_{A}(a,b)\text{, if }a\in X\\1\text{, otherwise}\end{cases}.$$ Clearly, $f$ is measurable as well, as it only differs from $\chi_{A}$ on a set of measure $0$. By Tonelli's theorem, $\int_{[0,1]\times[0,1]}f(x,y)d(x,y)=1$, hence, for $\tilde{A}:=\{(x,y)\in[0,1]^{2}:f(x,y)=1\}$, we have $\mu(\tilde{A})=1$. But $A\subseteq\tilde{A}$ and $\mu(\tilde{A}\setminus A)=0$. Hence $\mu(A)=1$. 

\item This now follows inductively with the help of Lemma \ref{coding is harmless}. For $n=2$, this is part (1). Now suppose that the statement is true for $n$. Encode $\tilde{\bigotimes}_{i=0}^{n}f_{i}$ as a subset of $[0,1]$ as described above.
For each $x\in [0,1]$, let $S_{x}:=\{(x_{1},...,x_{n}):(x,x_{1},...,x_{n})\in\tilde{\bigotimes}_{i=0}^{n}f_{i}\}$. By the inductive assumption, $\mu(S_{x})=1$ for all $x\in f_{0}$. Define, for each $x\in[0,1]$, $\tilde{S}_{x}:=d_{n-1}(S_{x})$, and let $Q:=\{(x,y)\in[0,1]^{2}:y\in S_{x}\}$. Then $\mu(Q)=1$ by the case $n=2$ and thus $\mu(\tilde{\bigotimes}_{i=0}^{n}f_{i})=1$ by Lemma \ref{coding is harmless}.

\end{enumerate}
\end{proof}

Lemma \ref{soundness}:

\begin{lemma}
The rules and axioms of intuitionistic first-order logic as given in \cite{Moschovakis}  are sound for as-OTM-realizability.
\end{lemma}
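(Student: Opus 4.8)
The plan is to check each axiom scheme and each deduction rule of intuitionistic first-order logic separately, following the template already displayed in the main body (the $\exists$-introduction rule and the case-distinction axiom scheme), with the invariant: given realizers for the hypotheses together with a random real $x$, one decomposes $x$ into finitely many independent pieces $x = \bigoplus_{i=0}^{k-1} x_i$ (justified by Lemma \ref{coding is harmless}) and feeds each piece to the corresponding subcomputation; the success set of the composite is then a dependent product of the individual success sets, each of measure $1$, hence of measure $1$ by Lemma \ref{large sections imply largeness}. Throughout we use that all occurring success sets are Lebesgue measurable, which holds under PD by Lemma \ref{definability of realizability}, so that the hypotheses of Lemma \ref{large sections imply largeness} are actually met.

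\textbf{Key steps, in order.} First I would fix a concrete axiomatization, namely the Hilbert-style system in \cite{Moschovakis}: the propositional axiom schemes ($\phi\to(\psi\to\phi)$, the distribution axiom, the two conjunction projections and the pairing axiom, the two disjunction injections and the case axiom $(\phi\to\chi)\to((\psi\to\chi)\to((\phi\vee\psi)\to\chi))$, and $\bot\to\phi$), the quantifier axioms ($\forall a\,\phi(a)\to\phi(t)$ and $\phi(t)\to\exists a\,\phi(a)$), the quantifier rules ($\{\psi\to\phi(a)\}\vdash\psi\to\forall a\,\phi(a)$ and $\{\phi(a)\to\psi\}\vdash\exists a\,\phi(a)\to\psi$ with $a$ not free in $\psi$), and modus ponens. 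Second, for the purely propositional axioms I would exhibit, uniformly in the codes $\lceil\phi_i\rceil$, the OTM-program that produces the realizer; most of these are immediate (for $\phi\to(\psi\to\phi)$, return the program that on input a realizer $s$ for $\phi$ and a fresh random real outputs the constant-$s$ program; for $\bot\to\phi$, note $\bot = (1=0)$ has no realizer, so any program works vacuously, matching clause (5)), and the only genuinely interactive one is the case axiom, already done in the body. Third, for the quantifier axioms: $\forall a\,\phi(a)\to\phi(t)$ is handled by evaluating the $\forall$-realizer at the (computable, since $t$ is built from parameters) value of $t$ and using the corresponding $t$-success set; $\phi(t)\to\exists a\,\phi(a)$ by packaging $(t, s)$ where $s$ realizes $\phi(t)$. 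Fourth, for the two quantifier rules: the $\exists$-rule is exactly the displayed example in the main text (decompose $x$ into four parts, chain $r_\forall$, $r_\exists(0)$, $r_\exists(1)$, $\xi$); the $\forall$-rule is the mirror image (from $r\Vdash\psi\to\phi(a)$, build the program that on input a realizer for $\psi$ and then an arbitrary $b$ and a random real, runs $r$ on that realizer to get a realizer of $\phi(b)$). Fifth, modus ponens: given $r_0\Vdash\phi$ and $r_1\Vdash\phi\to\psi$, by clause (4) there is $\mathcal{O}\in\mathcal{B}$ with $r_1^{x}(r_0)\Vdash\psi$ for all $x\in\mathcal{O}$; the program simulates $r_1^{x}(r_0)$ on its random oracle $x$, which succeeds on $\mathcal{O}$, of measure $1$.

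\textbf{Main obstacle.} The delicate point is the implication clause (4), in which the random real is chosen \emph{after} a realizer $s$ of the antecedent is fixed, and $s$ is allowed to carry arbitrary parameters. Whenever a constructed realizer must, at some later stage, feed a realizer it has itself produced into an implication-realizer supplied as a hypothesis (this happens in the case axiom, in both quantifier rules, and in any nested use), one must verify that the relevant $s$-success set invoked by clause (4) genuinely has measure $1$ and that the finitely many such sets arising along a single run can be consolidated into one measure-$1$ set over which the whole computation succeeds; the bookkeeping is exactly a dependent-product estimate, so Lemma \ref{large sections imply largeness}(2) is the right tool, but one has to be careful that the index $k$ of the product and the decomposition of $x$ are chosen in advance (depending only on the syntactic shape of the axiom/rule, not on the oracle) so that Lemma \ref{coding is harmless} applies cleanly. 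A secondary, purely notational nuisance is keeping track of which success set is the $\xi$-success set for which relevant input $\xi$ of which subformula, since the definition attaches success sets to every clause; once the invariant is stated carefully this is routine, and no single case is conceptually harder than the two already presented in the main body.
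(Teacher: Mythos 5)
Your proposal follows essentially the same route as the paper's proof (carried out in the appendix as Lemmas \ref{rule soundness} and \ref{axiom soundness}): a case-by-case construction of realizers that split the random oracle via Lemma \ref{coding is harmless} and chain the given realizers, with the composite success sets shown to have measure $1$ by the dependent-product Lemma \ref{large sections imply largeness} and measurability secured by PD through Lemma \ref{definability of realizability}. The only slight underspecification is your one-line treatment of modus ponens: the constructed program must still feed the relevant input $\xi$ for $\psi$ to the realizer it has computed, using a second, independent piece of the oracle (so the dependent-product estimate is needed there as well), but this is exactly what your stated invariant prescribes and what the paper does.
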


The proof of this statement is split into the following two sublemmas, which state that soundness holds for the deduction rules and for the axiom (schemes) of the mentioned calculus, respectively. The arguments are similar to those given in the (unpublished) appendix to \cite{CGP}, which will appear in forthcoming work with Galeotti and Passmann; however, the transition to transfinite computations requires some extra care, so that we work the details in most cases.

\begin{lemma}{\label{rule soundness}}
The following rules are sound for as-OTM-realizability:
\begin{enumerate}
    \item $\{\phi,\phi\rightarrow \psi\}\vdash \psi$
    \item If $a$ has no free occurences in $\phi$, then $\{\phi\rightarrow\psi\}\vdash\phi\rightarrow\forall{a}\psi(a)$
    \item If $a$ has no free occurences in $\psi$, then $\{\phi(a)\rightarrow\psi\}\vdash\exists{a}\phi(a)\rightarrow\psi$
\end{enumerate}
\end{lemma}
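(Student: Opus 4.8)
\textbf{Proof plan for Lemma \ref{rule soundness}.}

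The plan is to treat each of the three rules by the same template: given realizers for the premises, construct an OTM-program $Q$ that, when fed a random oracle $x$, splits $x$ into finitely many independent parts $x = \bigoplus_{i} x_i$ (legitimate by Lemma \ref{coding is harmless}), runs the given realizers with these parts as their oracles, and pipes the outputs into each other in the order dictated by the clauses of Definition \ref{main def}. The correctness of $Q$ will always be a conjunction of statements of the form ``$x_i$ lies in such-and-such success set,'' each of which has measure $1$ by the definition of as-OTM-realizability; Lemma \ref{large sections imply largeness} then packages these into the statement that the set of good $x$ has measure $1$, which is exactly what is needed to exhibit $Q$ (together with the ambient parameters) as an as-OTM-realizer of the conclusion.

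Concretely: for (1) (modus ponens), given $r_\phi \Vdash_{\text{as}}^{\text{OTM}}\phi$ and $r_{\phi\to\psi}\Vdash_{\text{as}}^{\text{OTM}}\phi\to\psi$ and a relevant input $\xi$ for $\psi$, I would set $Q$ to decompose $x = x_0\oplus x_1$, form $r' := r_{\phi\to\psi}^{x_0}(r_\phi)$, and output $r'^{x_1}(\xi)$; the good set is $\{x : x_0\in\text{succ}^{r_\phi}_{r_{\phi\to\psi}} \text{ and } x_1\in\text{succ}^{\xi}_{r'}\}$, which has measure $1$ by the two-dimensional case of Lemma \ref{large sections imply largeness}. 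For (2), given $r_{\phi\to\psi}\Vdash_{\text{as}}^{\text{OTM}}\phi\to\psi$, the realizer of $\phi\to\forall a\,\psi(a)$ takes an as-realizer $s$ of $\phi$, a set $b$, and a relevant input $\xi$ for $\psi(b)$; since $a$ does not occur freely in $\phi$, $s$ also realizes $\phi$ in the context where $a=b$, so $Q$ decomposes $x=x_0\oplus x_1$, computes $r' := r_{\phi\to\psi}^{x_0}(s)$ (which realizes $\psi(b)$ once $x_0$ is in the relevant success set) and outputs $r'^{x_1}(\xi)$; again the good set is a finite intersection of measure-$1$ success conditions, so Lemma \ref{large sections imply largeness} applies, with the $b$-success set of $Q$ being the resulting measure-$1$ set. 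Rule (3) is exactly the rule whose proof is already displayed in the proof of Lemma \ref{soundness} above, so I would simply refer to that argument, or reproduce the four-way split $x=\bigoplus_{i=0}^3 x_i$ with the chained computation $((r_{\forall}^{x_0}(r_{\exists}^{x_1}(1)))^{x_2}(r_{\exists}^{x_1}(0)))^{x_3}(\xi)$ verbatim.

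The only genuinely delicate point, and the one I would be careful to state explicitly, is the bookkeeping of \emph{which} success sets occur and the fact that they are all of measure $1$ \emph{uniformly in the data already fixed}: e.g. in rule (3) the set $\text{succ}^{w}_{r_{\forall}}$ depends on $w = r_{\exists}^{x_1}(1)$, which itself depends on $x_1$, so the chained success conditions are genuinely of the ``dependent product'' shape $\tilde{\bigotimes}_{i}f_i$ rather than a plain product; this is precisely why Lemma \ref{large sections imply largeness}(2), and not merely its special case (1), is needed. Once that is observed, each rule is a routine instance of the template, and the measurability of all the sets in play is guaranteed by the blanket assumption of PD together with Lemma \ref{definability of realizability}. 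I do not expect any real obstacle beyond getting the order of the oracle-splittings and function applications to match the clauses of Definition \ref{main def} exactly.
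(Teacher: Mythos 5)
Your template is exactly the paper's: split the oracle via Lemma \ref{coding is harmless}, chain the given realizers, observe that correctness amounts to membership of the pieces $x_i$ in success sets of measure $1$ (possibly depending on earlier pieces), and invoke Lemma \ref{large sections imply largeness} for the dependent product. Your treatment of rule (1) coincides with the paper's verbatim, and for rule (3) the paper indeed just reproduces the argument already displayed in the proof of Lemma \ref{soundness}, as you propose; your remark that the dependent-product form of Lemma \ref{large sections imply largeness} is what is really needed is also the right emphasis.

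The one place where your bookkeeping goes wrong is rule (2). As you set it up, the constructed realizer receives a realizer $s$ of $\phi$, a set $b$, and a relevant input $\xi$, but the computation you write, $r' := r_{\phi\to\psi}^{x_0}(s)$ followed by $r'^{x_1}(\xi)$, never consults $b$ at all, so the parenthetical claim that $r'$ realizes $\psi(b)$ cannot be justified: nothing in the computation depends on which $b$ was given. The paper handles this via the free-variable convention of Definition \ref{main def}: since $a$ occurs free in the premise, the given realizer $r_{\phi\to\psi(a)}$ is by definition a realizer of $\forall a(\phi\to\psi(a))$, and the constructed program first instantiates it at the relevant input for $\forall a\,\psi(a)$ (i.e.\ at the set $b$ itself), obtaining $r_1 := r_{\phi\to\psi(a)}^{x_0}(b) \Vdash_{\text{as}}^{\text{OTM}} \phi\to\psi(b)$, and only then applies $r_1^{x_1}$ to the realizer of $\phi$; the measure-$1$ bookkeeping then proceeds exactly as in your template, with $x_0 \in \text{succ}^{b}_{r_{\phi\to\psi(a)}}$ and $x_1 \in \text{succ}^{r_\phi}_{r_1}$. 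So the fix is just to insert the instantiation step and to note that the relevant input for the universally quantified conclusion is the set $b$; with that correction your argument matches the paper's.
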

\begin{proof}

\begin{enumerate} 
    \item Let as-OTM-realizers $r_{\phi},r_{\phi\rightarrow\psi}$ for $\phi$ and $\phi\rightarrow\psi$ be given. 
    The as-OTM-realizer computed from $r_{\phi}$ and $r_{\phi\rightarrow\psi}$ is the program $Q$ that, in the oracle $x$, does the following: Let a relevant input $\xi$ for $\psi$ be given. Decompose $x$ into $x=x_{0}\oplus x_{1}$.
 Compute $r_{x_{0}}:=r_{\phi\rightarrow\psi}^{x_{0}}(r_{\phi})$. Then compute $r_{x_{0}}^{x_{1}}(\xi)$ and return the result. 

    To see that this works, first note that, if $x_{0}\in \text{succ}^{r_{\phi}}_{r_{\phi\rightarrow\psi}}$, then 
        $r_{x_{0}}$ will be an as-OTM-realizer for $\psi$.
    If $x$ is such that $x_{0}\in \text{succ}^{r_{\phi}}_{r_{\phi\rightarrow\psi}}$, 
    and $x_{1}\in\text{succ}^{\xi}_{r_{x_{0}}}$, 
    then $r_{x_{0}}^{x_{1}}(\xi)$ will by definition have the property required by Definition \ref{main def}. But by Lemma \ref{large sections imply largeness} (and Lemma \ref{coding is harmless}), we have that $\mathcal{O}:=\{x_{0}\oplus x_{1}:x_{0}\in\mathcal{O}_{\phi\rightarrow\psi}\wedge x_{1}\in \mathcal{O}_{\psi,x_{0}}\}$ has measure $1$. 
    \item       If $x$ does not occur freely in $\psi$ either, this is trivial. Otherwise, by definition of as-OTM-realizability for formulas with free variables, an as-OTM-realizer for $\phi\rightarrow\psi(a)$ is one for $\forall{a}(\phi\rightarrow\psi(a))$. Let such an as-OTM-realizer $r_{\phi\rightarrow\psi(a)}$ be given. Our as-OTM-realizer for $\phi\rightarrow\forall{a}\psi(x)$ will work in the following way: 

  Given an as-OTM-realizer $r_{\phi}$ for $\phi$, a relevant input $\xi$ for $\forall{y}\psi(y)$ and a real number $x$, first let $x:=x_{0}\oplus x_{1}$. Then compute $(r_{\phi\rightarrow\psi(a)}^{x_{0}}(\xi))^{x_{1}}(r_{\phi})$.

  This works whenever $x$ is such that $x_{0}\in\text{succ}^{\xi}_{r_{\phi\rightarrow\psi(a)}}$ -- which means that $r_{1}:=r_{\phi\rightarrow\psi(a)}^{x_{0}}(\xi)\Vdash_{\text{as}}^{\text{OTM}}\phi\rightarrow\psi(\xi)$ and $x_{1}\in\text{succ}^{r_{\phi}}_{r_{1}}$ -- which means that $r_{1}^{x_{1}}(r_{\phi})\Vdash_{\text{as}}^{\text{OTM}}\psi(\xi)$. By Lemma \ref{large sections imply largeness}, these $x$ form a set of measure $1$.

    \item     Let as-OTM-realizers $r_{\forall}\Vdash_{\text{as}}^{\text{OTM}}(\phi(a)\rightarrow\psi)$ and $r_{\exists}\Vdash_{\text{as}}^{\text{OTM}}\exists{a}\phi(a)$ be given. Moreover, let $x\subseteq\omega$ be a real number. Finally, let $\xi$ be a relevant input for $\psi$. 
    
    Our as-OTM-realizer $Q$ is an as-OTM-program that, given this data, decomposes $x$ into $x=\bigoplus_{i=0}^{3}x_{i}$ and then computes $((r_{\forall}^{x_{0}}(r_{\exists}^{x_{1}}(1)))^{x_{2}}(r_{\exists}^{x_{1}}(0)))^{x_{3}}(\xi)$. 

   If $x$ is such that $x_{1}\in\text{succ}_{r_{\exists}}$, we will have that $r_{\exists}^{x_{1}}(0)\Vdash_{\text{as}}^{\text{OTM}}\phi(r_{\exists}^{x_{1}}(1))$. Let $w:=r_{\exists}^{x_{1}}(1)$. If moreover $x_{0}\in\text{succ}_{r_{\forall}}^{w}(1)$, then $r_{1}:=r_{\forall}^{x_{0}}(w)\Vdash_{\text{as}}^{\text{OTM}}\phi(w)\rightarrow\psi$. On the other hand, we have that $r_{2}:=r_{\exists}^{x_{1}}(0)\Vdash_{\text{as}}^{\text{OTM}}\phi(w)$. So, if $x_{2}\in\text{succ}_{r_{1}}^{r_{2}}$, then $r_{3}:=(r_{\forall}^{x_{0}}(r_{\exists}^{x_{1}}(1)))^{x_{2}}(r_{\exists}^{x_{1}}(0))\Vdash_{\text{as}}^{\text{OTM}}\psi$. Thus, finally, if $x_{3}\in\text{succ}_{r_{3}}^{\xi}$, then $r_{3}^{x_{3}}(\xi)$ is as desired. 

Again by Lemma \ref{large sections imply largeness}, since the occuring success sets have measure $1$ by definition of as-OTM-realizability, the set of $x$ satisfying these conditions has measure $1$. 
\end{enumerate}
\end{proof}

\begin{lemma}{\label{axiom soundness}}
The following axiom schemes are sound for as-OTM-realizability:
\begin{enumerate}
\item $\phi\rightarrow(\psi\rightarrow\phi)$
\item $(\phi\rightarrow\psi)\rightarrow((\phi\rightarrow(\psi\rightarrow\chi))\rightarrow(\phi\rightarrow\chi))$
\item $\phi\rightarrow(\psi\rightarrow(\phi\wedge\psi))$
\item $(\phi\wedge\psi)\rightarrow\phi$ and $(\phi\wedge\psi)\rightarrow\psi$
\item $\phi\rightarrow(\phi\vee\psi)$ and $\psi\rightarrow(\phi\vee\psi)$
\item $(\phi\rightarrow\chi)\rightarrow((\psi\rightarrow\chi)\rightarrow((\phi\vee\psi)\rightarrow\chi))$
\item $(\phi\rightarrow\psi)\rightarrow((\phi\rightarrow\neg\psi)\rightarrow\neg\phi)$
\item $\neg\phi\rightarrow(\phi\rightarrow\psi)$
\item $\forall{x}\phi(x)\rightarrow\phi(t)$
\item $\phi(t)\rightarrow\exists{x}\phi(x)$
\end{enumerate}
\end{lemma}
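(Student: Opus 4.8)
The plan is to handle all ten schemes by one uniform recipe, of which the $(\phi\vee\psi)$-elimination scheme already proved in the main text, and the rules of Lemma \ref{rule soundness}, are representative special cases. Each scheme has the form $A_{1}\rightarrow(A_{2}\rightarrow\cdots\rightarrow(A_{k-1}\rightarrow A_{k})\cdots)$, possibly preceded by an outermost block of universal quantifiers coming from the convention in Definition \ref{main def} for instances with free variables (and, in (9)--(10), with an instantiation at a term in the last arrow). Uniformly in $\lceil\phi_{1}\rceil,\dots,\lceil\phi_{n}\rceil$, the program $P_{\mathcal{A}}$ outputs the code of a single program $Q$ (with empty parameter tuple) that curries: it consumes, in turn, the witnesses $\vec{b}$ for the quantified free variables and the as-OTM-realizers $r_{1},\dots,r_{k-1}$ of the antecedents, and, once these, a relevant input $\xi$ for $A_{k}$, and a random oracle $x$ are all available, it decomposes $x=\bigoplus_{j<m}x_{j}$ into one block per internal application, runs the classical Kleene combinator of the scheme --- each application $(\cdot)^{x_{j}}(\cdot)$ consuming one block --- and returns $(\cdot)^{x_{m-1}}(\xi)$.

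For the ``positive'' schemes (1)--(6) and (9)--(10) the combinator is exactly the one from ordinary Kleene realizability: the constant map $r_{1}\mapsto(r_{2}\mapsto r_{1})$ for (1); the chaining $a:=r_{1}^{x_{0}}(s)$, $b:=r_{2}^{x_{1}}(s)$, $c:=b^{x_{2}}(a)$, output $c^{x_{3}}(\xi)$, for (2); explicit pairing for (3) and the two projections for (4); the two injections (tagging with $0$, resp.\ $1$) for (5); the case distinction on $r_{\phi\vee\psi}^{x_{0}}(0)\in\{0,1\}$ for (6); and instantiation for (9) and (10) --- for (9), $Q$, given the witness $b$ for the term $t$, a realizer $r$ of $\forall x\,\phi(x)$, and oracle $x$, returns $r^{b,x}$, which by clause (7) of Definition \ref{main def} realizes $\phi(b)$ for $x$ in a set $\mathcal{O}_{b}\in\mathcal{B}$; for (10), $Q$, given $b$ and a realizer $s$ of $\phi(b)$, returns the (oracle-independent) program that outputs $b$ on input $1$ and $s$ on input $0$. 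In each case one checks, block by block, that the set of oracles $x$ for which $Q$ succeeds is a dependent product $\tilde{\bigotimes}_{j<m}f_{j}$ whose $j$-th factor $f_{j}$ assigns to $(x_{0},\dots,x_{j-1})$ a success set of a realizer already computed from those blocks, hence a set of measure $1$ by the definition of as-OTM-realizability; Lemma \ref{large sections imply largeness}(2) (together with Lemma \ref{coding is harmless} to pass between $\bigoplus_{j<m}x_{j}$ and tuples) then gives that this success set has measure $1$. Measurability of all the success sets involved is guaranteed by Lemma \ref{definability of realizability} and the standing assumption of PD.

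The schemes (7) and (8) are handled instead by the elementary observation that, since $1=0$ is false and hence unrealizable, $\neg\psi$ is as-OTM-realizable if and only if $\psi$ is not, and whenever $\psi$ is not as-OTM-realizable \emph{every} safe coding-stable program realizes $\neg\psi$ (clause (4) of Definition \ref{main def} becomes vacuous, and every $\mathcal{O}\in\mathcal{B}$ is nonempty). For (8), $P_{\mathcal{A}}$ outputs the program $Q$ that, ignoring the realizer $r$ of $\neg\phi$ handed to it, returns the trivial program: if a realizer $r$ of $\neg\phi$ exists at all, then $\phi$ is unrealizable --- an alleged $s\Vdash_{\text{as}}^{\text{OTM}}\phi$ fed to $r$ on any oracle in $r$'s (nonempty) success set would produce an as-OTM-realizer of $1=0$ --- so $\phi\rightarrow\psi$ is realized vacuously. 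For (7), $P_{\mathcal{A}}$ outputs the program that, after collecting realizers $r_{1}\Vdash_{\text{as}}^{\text{OTM}}(\phi\rightarrow\psi)$ and $r_{2}\Vdash_{\text{as}}^{\text{OTM}}(\phi\rightarrow\neg\psi)$, returns the trivial program; the same kind of argument shows that the presence of $r_{1}$ and $r_{2}$ forces $\phi$ to be unrealizable (run an alleged $s\Vdash_{\text{as}}^{\text{OTM}}\phi$ through $r_{1}$ to get $t_{1}\Vdash_{\text{as}}^{\text{OTM}}\psi$, through $r_{2}$ to get $t_{2}\Vdash_{\text{as}}^{\text{OTM}}\neg\psi$, and then $t_{2}^{x}(t_{1})$ would realize $1=0$ for suitable $x$), so $\neg\phi$ is again realized vacuously.

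The main obstacle is not conceptual but bookkeeping: in the positive cases one must arrange the block decomposition so that the ``good'' set for the block $x_{j}$ is allowed to depend on $x_{0},\dots,x_{j-1}$ --- since it is the success set of a realizer that has already been computed from the earlier blocks --- and then recognise this nested pattern as an instance of the dependent product $\tilde{\bigotimes}$, so that Lemma \ref{large sections imply largeness}(2) applies. This is precisely the organisation exhibited in the two worked examples of the main text, and once it is in place each scheme reduces to writing down its classical combinator; the only genuinely different point is (7)--(8), where one resists building a ``proper'' realizer and instead argues vacuity from the unrealizability of $1=0$.
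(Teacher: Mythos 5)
Your proposal is correct and follows essentially the same route as the paper: curried combinators from Kleene realizability, oracle blocks whose ``good'' sets are success sets of already-computed realizers, and Lemma \ref{large sections imply largeness} together with Lemma \ref{coding is harmless} (under PD, via Lemma \ref{definability of realizability}) to see that the resulting set of oracles has measure $1$; your treatment of (8), (9), (10) matches the paper's. The only deviation is scheme (7), which the paper handles by directly constructing the program that maps $r_{\phi}$ to $r_{1}^{x_{1}}(r_{0})$ and verifying the implication clause for $\neg\phi=\phi\rightarrow(1=0)$, whereas you argue that the existence of realizers for $\phi\rightarrow\psi$ and $\phi\rightarrow\neg\psi$ forces $\phi$ to be unrealizable, so that $\neg\phi$ is realized vacuously; both arguments are sound and rest on the same observation that $1=0$ has no as-OTM-realizers.
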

\begin{proof}
For the most part, the proofs are obvious and the details given in \cite{CGP-OTM}, section 4.1, for ``unrandomized'' OTMs go through. We thus focus on the cases that are more involved or  involve the oracle in some relevant way. 
\begin{enumerate}
    \item Trivial.
\item Let $r_{\phi\rightarrow\psi}\Vdash_{\text{as}}^{\text{OTM}}\phi\rightarrow\psi$. We need to compute an as-OTM-realizer $r$ for $((\phi\rightarrow(\psi\rightarrow\chi))\rightarrow(\phi\rightarrow\chi))$. $r$ will, on input $r_{\phi\rightarrow(\psi\rightarrow\xi)}\Vdash_{\text{as}}^{\text{OTM}}\phi\rightarrow(\psi\rightarrow\chi)$, output the code of an OTM-program $P$ that does the following: Given a real number $x$ and an as-OTM-realizer $r_{\phi}\Vdash_{\text{as}}^{\text{OTM}}\phi$, we let $x=:x_{0}\oplus x_{1}$. 
Then successively compute $r_{0,x}:=r_{\phi\rightarrow\psi}^{x_{0}}(r_{\phi})$, $r_{1,x}:=r_{\phi\rightarrow(\psi\rightarrow\chi)}^{x_{0}}(r_{\phi})$, $r_{2}:=r_{1,x}^{x_{1}}(r_{0,x})$ and output $r_{2}$. 

This will output an as-OTM-realizer of $\chi$ if $x$ is such that $x_{0}\in\text{succ}_{r_{\phi\rightarrow\psi}}^{r_{\phi}}\cap\text{succ}_{r_{\phi\rightarrow(\psi\rightarrow\chi)}}^{r_{\phi}}$ and $x_{1}\in\text{succ}_{r_{1,x}}^{r_{0,x}}$. By Lemma \ref{large sections imply largeness}, these $x$ form a set of measure $1$. 
\item Trivial. 
\item Trivial. 
\item Trivial. 
\item Let $r_{\phi\rightarrow\chi}\Vdash_{\text{as}}^{\text{OTM}}\phi\rightarrow\chi$ be given. Our procedure will map this to the OTM-program $r$ which works as follows: Given $r_{\psi\rightarrow\chi}\Vdash_{\text{as}}^{\text{OTM}}\psi\rightarrow\chi$, output the OTM-program $r^{\prime}$ which has the following function: Given $r_{\phi\vee\psi}\Vdash_{\text{as}}^{\text{OTM}}\phi\vee\psi$, along with $x\subseteq\omega$, first compute $j:=r_{\phi\vee\chi}^{x_{0}}(0)$. Then one of the following cases occurs:

\begin{enumerate}
\item If $j=0$, compute $r_{\phi\rightarrow\chi}^{x_{1}}(r_{\phi\vee\psi}^{x_{0}}(1))$ and output it.
\item If $j=1$, compute $r_{\psi\rightarrow\chi}^{x_{1}}(r_{\phi\vee\psi}^{x_{0}}(1))$ and output it.
\end{enumerate}

If $x$ is such that $x_{0}\in\text{succ}_{r_{\phi\vee\psi}}$, then $j\in\{0,1\}$ and $r_{0}:=r_{\phi\vee\psi}^{x_{0}}(1)$ will be an as-OTM-realizer of the $j$-th element of the disjunction $\phi\vee\psi$. If $j=0$, and $x_{1}\in\text{succ}_{r_{\phi\rightarrow\chi}}^{r_{0}}$, then the the output $r_{\phi\rightarrow\chi}^{x_{1}}(r_{0})$ will be an as-OTM-realizer of $\chi$. Similarly if $j=1$ and $x_{1}\in\text{succ}_{r_{\psi\rightarrow\chi}}^{r_{0}}$. Thus, for each $x_{0}\in\text{succ}_{r_{\phi\vee\psi}}$, the set of corresponding $x_{1}$ has measure $1$. By Lemma \ref{large sections imply largeness}, the set of $x$ satisfying these conditions has measure $1$. 

\item Recall that we interpret $\neg\phi$ as $\phi\rightarrow(1=0)$. Let $r_{\phi\rightarrow\psi}\Vdash_{\text{as}}^{\text{OTM}}\phi\rightarrow\psi$. We will compute on this input the OTM-program $r$ that works as follows: Given $r_{\phi\rightarrow\neg\psi}\Vdash_{\text{as}}^{\text{OTM}}$, $r_{\phi}\Vdash_{\text{as}}^{\text{OTM}}\phi$ and $x\subseteq\omega$, we proceed as follows: Decompose $x=:x_{0}\oplus x_{1}$. Then compute $r_{0}:=r_{\phi\rightarrow\psi}^{x_{0}}(r_{\phi})$ and $r_{1}:=r_{\phi\rightarrow\neg\psi}^{x_{0}}(r_{\phi})$. Then return $r_{1}^{x_{1}}(r_{0})$. 
If $x$ is such that $x_{0}\in\text{succ}_{r_{\phi\rightarrow\psi}}^{r_{\phi}}\cap \text{succ}_{r_{\phi\rightarrow\neg\psi}}^{r_{\phi}}$ (both of which have measure $1$ by assumption), then $r_{0}\Vdash_{\text{as}}^{\text{OTM}}\psi$ while $r_{1}\Vdash_{\text{as}}^{\text{OTM}}\psi\rightarrow(1=0)$. Thus, if also $x_{1}\in\text{succ}_{r_{1}}^{r_{0}}$, then $r_{1}^{x_{1}}(r_{0})\Vdash_{\text{as}}^{\text{OTM}}(1=0)$, as desired. Again, the set of these $x$ has measure $1$ by Lemma \ref{large sections imply largeness}.
\item Let $r_{\neg\phi}$ be an as-OTM-realizer of $\neg\phi$, i.e., of $\phi\rightarrow(1=0)$. 
If there was an as-OTM-realizer $r$ for $\phi$, then, taking $x\in\text{succ}^{r}_{r_{\neg\phi}}$, we would have that $r_{\neg\phi}^{x}(r_{\phi})\vdash_{\text{as}}^{\text{OTM}}(1=0)$, which, by definition of as-OTM-realizability for atomic formulas, cannot be. It follows that $\phi$ has no as-OTM-realizers. Thus, we can output the OTM-program that returns $\emptyset$ on every input as our as-OTM-realizer for $\phi\rightarrow\psi$, as the requirement for such a realizer are vacuously satisfied.
\item Given $r_{\forall}\Vdash_{\text{as}}^{\text{OTM}}\forall{a}\phi(a)$, a set $t\in H_{\omega_{1}}$ and a real number $x$, we output $r_{\forall}^{x}(t)$. Call this procedure $P$; then $\text{succ}^{t}_{P}=\text{succ}^{t}_{r_{\forall}}\in\mathcal{B}$.
\item Given an as-OTM-realizer $r_{\phi}$ of $\phi(t)$, return the program that, on input $0$, returns $r_{\phi}$ while on input $1$, it returns $t$ (recall that, by clause (4) in the definition of as-OTM-realizers, the parameters used in the antecedent of an implication are passed on to the program realizing the succedent, so that $t$ is indeed available).
\end{enumerate}
    
\end{proof}

\begin{lemma}
\begin{enumerate}
    \item The axioms of extensionality, empty set, pairing and union are as-OTM-realizable.
    \item The $\Delta_{0}$-separation scheme is as-OTM-realizable.
    \item The (full) collection scheme is as-OTM-realizable.
    \item The induction scheme is OTM-realizable.
\end{enumerate}
\end{lemma}
\begin{proof}
\begin{enumerate}
    \item Trivial.
    \item Let a set $X\in H_{\omega_{1}}$ be given, along with a $\Delta_{0}$-formula $\phi$ and a real number $x$.\footnote{Here and below, we suppress mentioning of parameters, as these do not lead to any extra complications.} To compute $X_{\phi}:=\{a\in X:\phi(a)\}$, we can ignore $x$ and use the fact that truth of $\Delta_{0}$-formulas is easily seen to be decidable on an OTM; computing an as-OTM-realizer for a true $\Delta_{0}$-statement $\phi$ (uniformly in $\phi$) then works as in \cite{CGP}, Theorem $25$.
 
    \item     Let $\phi(x,y)$ be $\Delta_{0}$. We want to realize the following statement: 
    $$\forall{X}((\forall{x\in X}\exists{y}\phi(x,y))\rightarrow\exists{Y}\forall{x\in X}\exists{y\in Y}\phi(x,y)).$$ 
    So let a (code for a) set $X$ be given. 
    Moreover, suppose that $r\Vdash_{\text{as-OTM}}\forall{x\in X}\exists{y}\phi(x,y)$. 
    For each $x\in X$, there is a set $\mathcal{O}_{x}\in\mathcal{B}$ such that $\phi(x,r^{y}(x))$ for all $y\in\mathcal{O}_{x}$. Let $\mathcal{O}:=\bigcap_{x\in X}\mathcal{O}_{x}$. Since $X$ is countable, we have $\mu(\mathcal{O})=1$. So, relative to the given oracle $z$, we compute $Y:=\{r^{z}(x):x\in X\}$ as our witness; to realize $\forall{x\in X}\exists{y\in Y}\phi(x,y)$, we take the program $P$ that, given $x\in X$, computes $r^{z}(x)$ (note that $z$ is available by the definition of as-OTM-realizability for implications). 
The definition of $Y$ guarantees that the witnesses thus obtained will be elements of $Y$. This will work whenever $z\in\mathcal{O}$.
  
    \item This leaves us with induction; our argument will be similar to the one for Theorem $43$ in \cite{CGP}. However, showing that the ocuring success sets have measure $1$ requires some care; we therefore give the proof in details.

    We need to provide an OTM-effective method that, given an $\in$-formula $\phi$, computes an as-OTM-realizer for $$\text{Ind}_{\phi}:=\forall{a}((\forall{b\in a}\phi(b))\rightarrow\phi(a))\rightarrow\forall{c}\phi(c).$$ 

    Let an as-OTM-realizer $r_{0}$ for $\forall{a}((\forall{b\in a}\phi(b))\rightarrow\phi(a))$ be given, along with a set $c$ and a real number $x=:x_{0}\oplus x_{1}$. 
    We need to compute an as-OTM-realizer for $\phi(c)$. We start by forming the transitive closure $\text{tc}(\{c\})$ of $\{c\}$. Then, by $\in$-recursion on $\text{tc}(\{c\})$, we will compute, for each $d\in\text{tc}(\{c\})$, two OTM-programs $r_{d}$ and $\tilde{r}_{d}$, where $\tilde{r}_{d}\Vdash_{\text{as}}^{\text{OTM}}\phi(d)$ and $r_{d}$ computes the function mapping each $e\in\text{tc}(d)$ to $\tilde{r}_{e}$. The procedure is perhaps best understood by thinking of $\text{tc}(\{c\})$ as a tree, where the edges are given by the $\in$-relation. In each step of the procedure, $r_{d}$ and $\tilde{r}_{d}$ will be computed for some $d\in\text{tc}(\{c\})$ (we can imagine that these programs are written on reserved portions of the tape next to $d$), after which $d$ will be ``marked'' as already considered. In the beginning, all nodes are unmarked. Then, the following is done as long as there are still unmarked elements in $\text{tc}(\{c\})$:

    \begin{enumerate}
        \item Find an $\in$-minimal unmarked element $d$ of $\text{tc}(\{c\})$.\footnote{Thus, the procedure will always start by considering the empty set. Note that every (safe) OTM-program is an as-OTM-realizer for $\forall{b\in\emptyset}\phi(b)$ by definition, since, by definition, $b\in\emptyset$ has no as-OTM-realizers.} 
        \item That $d$ is $\in$-minimal means that all of its elements $e$ have been marked, so that $\tilde{r}_{e}$ will already have been computed for all $e\in d$. Define $r_{d}$ to be the as-OTM-algorithm that, given $d$ and $e$, follows the steps of the computation made so far to compute $\tilde{r}_{e}$. 
        \item Compute $\tilde{r}_{d}:=(r_{0}^{x_{0}}(d))^{x_{1}}(r_{d})$ and mark $d$ as considered.
    \end{enumerate}

    Once this procedure has finished, $c$ will be marked. This means that $\tilde{r}_{c}$ has been computed, which will be output as the desired as-OTM-realizer of $\phi(c)$.

    We will now prove by $\in$-induction that this procedure works as desired; more precisely, we claim that, for every $c\in H_{\omega_{1}}$, there is a set $\mathcal{O}_{c}\in\mathcal{B}$ such that the above procedure, when run in an oracle $x\in\mathcal{O}_{c}$, produces appropriate $r_{a}$ and $\tilde{r}_{a}$, for every $a\in\text{tc}(\{c\})$. 

 So let $c\in H_{\omega_{1}}$ be given and, by induction, assume the claim proved for all elements of $\text{tc}(c)$, and let $\{\mathcal{O}_{a}:a\in\text{tc}(c)\}$ be the corresponding sets guaranteed to exist by the inductive assumption. Let $\mathcal{O}_{<c}:=\bigcap_{a\in\text{tc}(c)}\mathcal{O}_{a}$. Then, as an intersection of countably many sets of measure $1$, $\mu(\mathcal{O}_{<c})=1$.

We assume from now on that $x\in\mathcal{O}_{<c}$. Then, in the oracle $x$, the above routine computes $r_{a}$ and $\tilde{r}_{a}$ uniformly in $a$, for all $a\in\text{tc}(c)$. Hence this routine, restricted to elements of $\text{tc}(c)$, is already our $r_{c}$.

Suppose that $x$ is such that $x_{0}\in\text{succ}^{c}_{r_{0}}$. Then $r_{1}:=r_{0}^{x_{0}}(c)\Vdash_{\text{as}}^{\text{OTM}}\forall{b\in c}\phi(b)\rightarrow\phi(c)$. By definition, $r_{c}\Vdash_{\text{as}}^{\text{OTM}}\forall{b\in c}\phi(b)$ (since it computes an as-OTM-realizer of $\phi(b)$ for every $b\in \text{tc}(c)\subseteq c$, relative to all $x\in\mathcal{O}_{<c}$).  Thus, if additionally $x_{1}\in \text{succ}_{r_{1}}^{r_{c}}$, then $r_{1}^{x_{1}}(r_{c})\Vdash_{\text{as}}^{\text{OTM}}\phi(c)$.

Thus, on input $c$, the above procedure works as desired if $x\in \mathcal{O}_{<c}$, $x_{0}\in\text{succ}^{c}_{r_{0}}$ and $x_{1}\in \text{succ}_{r_{1}}^{r_{c}}$. By Lemma \ref{large sections imply largeness}, the set $\mathcal{O}$ of those $x$ that satisfy the latter two conditions has measure $1$; thus, the same is true of $\mathcal{O}_{<c}\cap\mathcal{O}$, which is the desired set of measure $1$ such that the above procedure will work relative to all elements of it.

\end{enumerate}
\end{proof}

\end{document}